\newtheorem{theorem}{Theorem}
\newtheorem{lemma}[theorem]{Lemma}
\tikzstyle{vertex}=[circle, draw, inner sep=0pt, minimum size=6pt]
\newcommand{\QEDmark}{\mbox{\textsc{qed}}}
\newcommand{\proofStarter}[1]{\textsc{#1} }
\begin{document}

\title{On the upper Bound of double Roman dominating function}
\date{}
\author{ {\small A. Teimourzadeh$^1$, D.A. Mojdeh$^2$\thanks{Corresponding author}}\\
 {\small$^{1,2}$Department of Mathematics, University of Mazandaran,}\\
  {\small Babolsar, Iran}\\
{\tt $^1$atiehteymourzadeh@gmail.com}\\
 {\tt$^2$damojdeh@umz.ac.ir}}
\maketitle

\begin{abstract}
A double Roman Dominating function on a graph $G$ is a function $ f:V\rightarrow \{0,1,2,3\}$ such that the following conditions hold.
If $f(v)=0$, then vertex $v$ must have at least two neighbors in $V_2$ or one neighbor in $V_3$ and
if $f(v)=1$, then vertex $v$ must have at least one neighbor in $V_2\bigcup V_3$.
The weight of a double Roman dominating function is the sum $w_f=\sum_{v\in V(G)}{f(v)}$. In this paper, we improve the upper bounds
of $\gamma_{dR}(G)$ that has already obtained and we show that $\gamma_{dR}(G)\leq\dfrac{12n}{11}$, for any graph with $\delta(G) \ge 2$. This bound improve the bounds
that have already been presented in \cite{chen} and \cite{kkcs}. Finally we prove the conjecture posed in \cite{kkcs}.
\end{abstract}
\noindent\textbf{Keywords:} Double Roman domination, upper bound.
\newline\textbf{MSC 2010}: 05C69

\section{Introduction}
Let $G=(V,E)$ be a graph of order $n$  with $V=V(G)$
and $E=E(G)$. The open neighborhood of a vertex $v\in V(G)$ is the set $N(v)=\{u: uv \in  E(G)\}$. The closed
neighborhood of a vertex $v\in V(G)$  is $N[v]=N(v)\cup \{v\}$. The open neighborhood of a set $S\subseteq V$ is the
set $N(S)=\cup_{v\in S}N(v)$. The closed neighborhood of a set $S\subseteq V$ is the set
$N[S]=N(S)\cup S=\cup_{v\in S}N[v]$. We denote the degree of $v$ by $d_G (v)=|N(v)|$. Given a set $S\subseteq V$,
the private neighborhood $pn[v,S]$ of $v\in S$ is defined by $ pn[v,S]=N[v]-N[S-\{v\}]$, equivalently,
$ pn[v,S]=\{u\in V: N[u]\cap S=\{v\}\}$. Each vertex in $pn[v,S]$ is called a private neighbor of $v$. By
$\Delta = \Delta(G)$ and $\delta = \delta(G)$, we denote the maximum degree and minimum degree
of a graph $G$, respectively.  We write $K_n$, $P_n$ and $C_n$ for the complete graph, path and cycle of order $n$,
respectively. A tree $T$ is an acyclic connected graph.

A set $S\subseteq V$ in a graph $G$ is called a dominating set
if $N[S]=V$. The domination number $\gamma (G)$ of $G$ is the minimum cardinality
of a dominating set in $G$, and a dominating set of $G$ of cardinality
$\gamma(G)$ is called a $\gamma$-set of $G$.
A subset $S \subseteq V$ is a $k$-dominating set if every vertex of $V - S$ has at least $k$ neighbors in $S$.
The $k$-domination number $\gamma_k(G)$ is the minimum cardinality of a $k$-dominating set of $G$ (see \cite{hhs}).

Given a graph $G$ and a positive integer $m$, assume that $g : V(G) \rightarrow \{0, 1, 2, \ldots , m\}$ is a function,
and suppose that $(V_0, V_1, V_2,\ldots, V_m)$ is the ordered partition of $V$ induced by $g$, where
$V_i = \{v \in V : g (v) = i\}$ for $i \in \{0, 1, \ldots , m\}$.  So we
can write $g = (V_0, V_1, V_2,\ldots, V_m)$. A Roman dominating function on graph $G$ is a function
$ f:V\rightarrow \{0,1,2\}$ such that if $v \in V_0$ for some $v\in V$, then there  exists a vertex $w \in N(v)$  with
$w \in V_2$. The weight of a Roman dominating function is the sum $w_f=\sum_{v\in V(G)}{f(v)}$, and the minimum weight
of $w_f$ for every Roman dominating function $f$ on $G$ is called the Roman domination number of $G$, denoted by
$\gamma_{R} (G)$.

Roman domination was introduced by Cockayne et al. in \cite{cdhh}, although this notion
 was inspired by the work of ReVelle et al in  \cite{rer}, and Stewart in \cite{s}, although in the present  several papers have been  issued on  Roman domination, for example \cite{chhm, cdhh, hrsw, h, lc, moj3}.
 The original study of Roman domination was motivated by the defense strategies used to defend the Roman Empire during the reign of Emperor Constantine the Great, 274-337 A.D. He decreed that for all cities in the Roman Empire, at most two legions should be stationed. Further, if a location having no legions was attacked, then it must be within the vicinity of at least one city at which two legions were stationed, so that one of the two legions could be sent to defend the attacked city. This part of history of the Roman Empire gave rise to the mathematical concept of Roman domination, as originally defined and discussed by Stewart  \cite{s} in (1999), and ReVelle and Rosing  \cite{rer} in (2000).

Beeler et al. \cite{bhh} have  defined double Roman domination on 2016.  What they propose is a stronger
version of Roman domination that doubles the protection by ensuring that any attack can be defended by at least two legions.
In  Roman domination  at most two Roman legions are deployed at any one location. But
as we will see in what follows, the ability to deploy three legions at a given location provides a level of defense that is both
stronger and more flexible, at less than the anticipated additional cost.\\
A double Roman Dominating function on a graph $G$ is a function $ f:V\rightarrow \{0,1,2,3\}$ such that the following conditions are met:

\noindent(a) if $f(v)=0$, then vertex $v$ must have at least two neighbors in $V_2$ or one neighbor in $V_3$.\\
(b) if $f(v)=1$ , then vertex $v$ must have at least one neighbor in $V_2\bigcup V_3$.

The weight of a double Roman dominating function is the sum $w_f=\sum_{v\in V(G)}{f(v)}$, and the minimum weight of $w_f$ for every double Roman dominating function $f$ on $G$ is called double roman domination number of $G$. We denote this number with $\gamma_{dR} (G)$ and
a double Roman dominating function of $G$ with weight $\gamma_{dR}(G)$ is called a $\gamma_{dR}(G)$-function of $G$, although in the present  several papers have issued on double Roman domination, for example \cite{acs, amjadi, bhh, jr, kkcs, moj1,moj2}.\\

\noindent\textbf{Proposition A}\label{pro A}
 (Beeler et al.2016 \cite{bhh}) In double Roman dominating function of weight
$\gamma_{dR}(G)$, no vertex needs to be assigned the value 1.\\

By Proposition A, in what follows, when we consider a $\gamma_{dR}$-function of a graph $G$ we assume no vertex assigned the value 1.\\

\noindent\textbf{Proposition B}\label{pro B}
(J. Rad et al. \cite{jr}) For any connected graph $G$ of order $n$ with maximum degree $\Delta$, $\gamma_{dR}(G)\leq2n-2\Delta+1$.\\

\noindent\textbf{Proposition C}\label{pro C} (A.H. Ahangar et al. \cite{acs}). If $T$ is a spider of order $n\geq3$, then $\gamma_{dR}(T)\leq n+1.$\\
\textbf{Proposition D}\label{pro D}(R. Khoeilar et al. \cite{kkcs}). Let $G$ be a simple graph of order $n\geq5$, $\delta(G) \ge 2$ and with no component isomorphic to $C_{5}$ or $C_{7}$. Then
$\gamma_{dR}(G) \leq\frac{11n}{10}$.\\

\noindent\textbf{Proposition E}\label{pro E}( \cite{chen}).  Let $G$ be a simple graph of order $n\geq5$, $\delta(G) \ge 2$ and with no component isomorphic to $C_{5}$. Then $\gamma_{dR}(G) \leq \lfloor\frac{13n}{11}\rfloor$.\\

In this paper, we improve the upper bounds 
of $\gamma_{dR}(G)$ that have already been presented in Propositions \ref{pro D}  and \ref{pro E} by showing that $\gamma_{dR}(G)\leq\dfrac{12n}{11}$, for any graph with $\delta(G) \ge 2$. Finally we prove the conjecture posed in \cite{kkcs}.

\section{Main results}
Before presenting the proof of main result, we give some lemmas that are useful for investigation. For integers $m$ and $k$ where $m\geq3$ and $k\geq1$, let $C_{m,k}$ be the graph obtained from a cycle $C_{m}:x_{1}x_{2}\cdots x_{m}x_{1}$ and a path $y_{1}y_{2}\cdots y_{k}$ by adding the edge $x_{1}y_{1}$. Let $\mathcal{H}$ be the family of all connected graphs $G$ with $\delta(G)\geq2$ and $\gamma_{dR}(G)\leq\frac{12n}{11}$. Let $Q$ be a graph obtained from two cycles $C_{5}$ and $C'_{5}$, by joining a vertex of $C_{5}$ to exactly one vertex of $C'_{5}$. Let $G_{Q}$ be a graph obtained from $G$ by adding $|V(G)|$ copies $Q_{1},\cdots ,Q_{|V(G)|}$ of $Q$, where the vertex of degree three in $Q_{i}$ is identified with the $i$th vertex of $G$ and $\mathcal{G}=\{G_{Q}|G$ is a graph$\}$.
\begin{lemma}\label{lem1}
Let $m$ and $k$ be two positive integers such that $m\geq3$ and $k\geq1$, with the conditions that if $m=5,7$, then $k\notin\{2,3,5\}$  and if $m=7$, then $k\neq3$.
Then $$\gamma_{dR}(C_{m,k})\leq\frac{12(m+k)}{11},$$
with equality if and only if $m=5$ and $k=6$. \end{lemma}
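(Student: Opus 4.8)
The plan is to organise everything around the order $n=m+k$ of $C_{m,k}$, reducing the inequality to a short finite check and isolating the single pair where equality can occur.

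First I would set up the one construction that does most of the work. Putting $f(x_1)=3$ dominates $x_2$, $x_m$ and $y_1$, so it only remains to double Roman dominate the two paths $x_3x_4\cdots x_{m-1}$ and $y_2y_3\cdots y_k$ as isolated paths. Using the standard values $\gamma_{dR}(P_t)=t$ when $3\mid t$ and $\gamma_{dR}(P_t)=t+1$ otherwise \cite{bhh}, this gives $\gamma_{dR}(C_{m,k})\le 3+\gamma_{dR}(P_{m-3})+\gamma_{dR}(P_{k-1})\le m+k+1$. Since $m+k+1\le\frac{12(m+k)}{11}$ is equivalent to $m+k\ge 11$, this crude bound already proves the inequality for all $m+k\ge 11$, and yields a strict inequality whenever $m+k\ge 12$. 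Hence equality cannot occur for $m+k\ge 12$, and the whole problem splits into the eight pairs with $m+k=11$ and the finitely many pairs with $m+k\le 10$.

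For the pairs with $m+k=11$ I would, for every pair other than $(5,6)$, improve the crude construction to weight at most $m+k=11<12$; the improvement is pure divisibility bookkeeping — choosing the phase of the pattern $3,0,0$, occasionally using the alternating pattern $2,0,2,0$ on the cycle, or placing the $3$ at $y_1$ instead of $x_1$ — which removes the stray $+1$. This leaves $(5,6)$, where the analogous construction gives exactly $12=\frac{12\cdot 11}{11}$, so the substance of the equality statement is the matching lower bound $\gamma_{dR}(C_{5,6})\ge 12$, which I expect to be the main obstacle. Let $f$ be a $\gamma_{dR}$-function with no value $1$ (Proposition A), split the weight as $w=\sum_{C}f+\sum_{P}f$ over the cycle $C=\{x_1,\dots,x_5\}$ and the path $P=\{y_1,\dots,y_6\}$, and argue by the value $f(x_1)\in\{0,2,3\}$. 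The key structural fact is that $x_2,\dots,x_5$ can be dominated only from within $C$ and $y_2,\dots,y_6$ only from within $P$, so the two sides interact solely across the edge $x_1y_1$. When $f(x_1)\in\{2,3\}$ the restriction $f|_C$ is a double Roman dominating function of $C_5$, so $\sum_C f\ge\gamma_{dR}(C_5)=6$, while $\sum_P f\ge 6$ because either $f|_P$ dominates $P_6$ or $y_2\cdots y_6$ is forced to be self-dominated ($\gamma_{dR}(P_5)=6$); thus $w\ge 12$. The delicate case is $f(x_1)=0$, where $x_1$ may lean on $y_1$ and the cycle side drops to only $\gamma_{dR}(P_4)=5$; here I would recover the missing unit from the path using the sharper facts that a double Roman dominating function of $P_6$ with prescribed end value $f(y_1)=2$, respectively $f(y_1)=3$, has weight at least $7$, respectively $8$ (each checked by direct inspection), which again forces $w\ge 12$.

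Finally I would clear the small pairs $m+k\le 10$. Here $\lfloor\frac{12(m+k)}{11}\rfloor=m+k$ and $\frac{12(m+k)}{11}$ is never an integer, so the required bound is simply $\gamma_{dR}(C_{m,k})\le m+k$ and no equality can arise. For each admissible (non-excluded) pair I would exhibit a weight-$(m+k)$ function by the same pattern-fitting as above; the genuine obstructions, where the minimum is forced up to $m+k+1$ (for instance $\gamma_{dR}(C_{5,2})=8>\frac{84}{11}$), all lie in the excluded list and so do not concern us. The only conceptually hard step is the lower bound $\gamma_{dR}(C_{5,6})\ge 12$, i.e.\ bounding how much weight the $C_5$ can offload onto the pendant path; the rest is a finite, if somewhat tedious, verification.
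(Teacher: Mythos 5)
Your proposal is correct, and for the inequality itself it is essentially the paper's argument: the paper obtains the same crude bound $\gamma_{dR}(C_{m,k})\le m+k+1$ in one line (a DRDF of the spanning Hamiltonian path $y_k\cdots y_1x_1x_2\cdots x_m$ is a DRDF of $C_{m,k}$, and $\gamma_{dR}(P_{m+k})\le m+k+1$), then performs the same trichotomy $m+k\ge 12$ / $m+k=11$ / $m+k\le 10$, with the small orders dismissed as ``a simple calculation.'' Where you genuinely part ways is the equality clause: the paper's entire treatment of it is the unsupported sentence that equality holds if and only if $m=5$ and $k=6$, whereas you correctly isolate the two claims that actually need proof --- (i) each of the seven pairs with $m+k=11$ other than $(5,6)$ admits a DRDF of weight at most $11$, and (ii) the lower bound $\gamma_{dR}(C_{5,6})\ge 12$ --- and you supply a sound argument for (ii): since $x_1y_1$ is the only edge between the cycle and the pendant path, restrictions of a $1$-free $\gamma_{dR}$-function are DRDFs of $C_5$, $P_4$, $P_5$ or $P_6$ as appropriate, and your auxiliary facts (weight at least $7$, resp.\ at least $8$, for a $1$-free DRDF of $P_6$ whose end vertex is preassigned $2$, resp.\ $3$) do check out by a short distribution count; in fact both minima equal $8$. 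One small point to make explicit in a final write-up: in the case $f(x_1)=0$ you should also record the subcase $f(y_1)=0$, where $x_1$ is necessarily dominated from inside the cycle and $y_1$ by $y_2$, so that $f|_C$ is a DRDF of $C_5$ and $f|_P$ a DRDF of $P_6$, giving $6+6=12$ by the same decomposition; your sketch only spells out the subcases $f(y_1)\in\{2,3\}$. With that line added, your argument establishes strictly more than the paper's own proof, which leaves the characterization of equality entirely unjustified.
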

\begin{proof}
Since $C_{m,k}$ has a Hamiltonian path, then $\gamma_{dR}(C_{m,k})\leq\gamma_{dR}(P_{m+k})\leq m+k+1$. Now, if $m+k\geq12$, then $m+k+1<\frac{12(m+k)}{11}$ and the result is valid. If $m+k=11$, then $m+k+1=\frac{12(m+k)}{11}$ and the result holds. Finally, if $m+k\leq10$, then by a simple calculation we can see that $\gamma_{dR}(C_{m,k})\leq m+k<\frac{12(m+k)}{11}$.
For equality, by the proof we deduce, if $\gamma_{dR}(C_{m,k})\frac{12(m+k)}{11}$, then it must be  $m+k=11$. In this case $\gamma_{dR}(C_{m,k})=12$. This equality holds if and only if $m=5$ and $k=6$. This completes the proof. \end{proof}

\begin{lemma}\label{lem2}
If $S\neq Q$ is a graph obtained from graphs $C_{m_{1},k_{1}},\cdots ,C_{m_{t},k_{t}}$ and cycles $C_{1},\cdots ,C_{r},\\ C'_{1},\cdots ,C'_{s}$ where $r+s+t\geq2$, by adding a new vertex $z$ and joining $z$ to the leaves of $C_{m_{1},k_{1}},\cdots ,C_{m_{t},k_{t}}$, to exactly one vertex of each $C_{i}$ for $1\leq i\leq r$ and identifying $z$ with one vertex of each $C'_{j}$ for $1\leq j\leq s$, then $\gamma_{dR}(S)\leq\frac{12n(S)}{11}.$\end{lemma}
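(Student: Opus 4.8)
The plan is to use that deleting $z$ leaves a disjoint union of pieces of three shapes: each type-1 branch is the tadpole $C_{m_i,k_i}$ itself, each type-2 branch is the cycle $C_i$, and each type-3 branch is the path $C'_j-z=P_{c'_j-1}$. I would construct a double Roman dominating function on $S$ by first fixing the value of $z$ and then placing a near-optimal function on each piece, charging the pieces against the target rate $\frac{12}{11}$. Writing $w_p$ and $n_p$ for the weight and the number of non-$z$ vertices used on piece $p$, one has $w_f(S)=f(z)+\sum_p w_p$ and $n(S)=1+\sum_p n_p$, so the desired bound $\gamma_{dR}(S)\le\frac{12n(S)}{11}$ follows as soon as $\sum_p\bigl(w_p-\frac{12}{11}n_p\bigr)\le \frac{12}{11}-f(z)$.

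Two complementary constructions feed this inequality, and I would take the cheaper one on each instance. In the first, set $f(z)=3$; then every neighbour of $z$ may be given value $0$, so a type-3 piece only needs its interior $P_{c'_j-3}$ dominated, a type-2 piece needs $P_{c_i-1}$ dominated, and a type-1 piece reduces to a shorter tadpole. Bounding these by $\gamma_{dR}(P_\ell)\le\ell+1$ and, for the residual tadpoles, by Lemma~\ref{lem1}, gives pieces of strongly negative surplus: a type-3 piece of length $c'_j$ contributes at most $-\frac{c'_j+10}{11}$, which is exactly what is needed to offset the heavy central cost $f(z)-\frac{12}{11}=\frac{21}{11}$ once $r+s+t\ge2$. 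In the second construction, set $f(z)=0$, dominate $z$ by assigning value $3$ to a single attachment vertex, and dominate every piece as a standalone cycle or tadpole via Lemma~\ref{lem1} and the known values of $\gamma_{dR}(C_n)$; here the central term vanishes and the full slack $\frac{12}{11}$ is available to absorb the pieces whose standalone rate exceeds one.

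The delicate core of the argument is controlling pieces whose double Roman rate stays above $\frac{12}{11}$ even after $z$ helps, and showing that on every admissible combination at least one of the two constructions stays within budget. The genuinely heavy pieces are the short cycles $C_5$ and $C_7$, where $\gamma_{dR}=n+1$, and the tadpoles excluded from Lemma~\ref{lem1}; under $f(z)=3$ a pendant $C_5$ still costs $5$ after its attachment is freed, so neither construction saves much, and the entire burden of the bound rests on the amortisation of the central weight across the at least two branches guaranteed by $r+s+t\ge2$. I expect the main obstacle to be precisely the finitely many extremal mixtures built from several short cycles — most critically several pendant copies of $C_5$ — where both constructions are simultaneously tight; these have to be examined directly, and the analysis must identify the distinguished configuration on which the bound is only barely met, namely $Q$, which is why it is excluded from the statement. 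This is the step I would scrutinise hardest, since a naive distribution of the central weight over the attached branches is exactly what becomes insufficient when too many heavy short cycles are present, and it is here that the exclusion of $Q$ and the equality case $C_{5,6}$ of Lemma~\ref{lem1} enter the accounting.
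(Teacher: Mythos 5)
Your central claim --- that on every admissible $S$ at least one of your two constructions ($f(z)=3$, or $f(z)=0$ with a single $3$ on one attachment vertex) stays within the budget --- is false, and the failures are not merely ``tight'' cases resolving to $Q$. Take $r=3$, $s=t=0$: a vertex $z$ joined by an edge to one vertex $w_i$ of each of three disjoint copies of $C_5$, so $n(S)=16$ and the lemma demands weight at most $\lfloor 192/11\rfloor = 17$. Under your first construction, $f(z)=3$ frees each $w_i$, but each cycle then still needs $\gamma_{dR}(P_4)=5$, giving $3+15=18$. Under your second, the host cycle (the one carrying the $3$ that dominates $z$) costs $6$ and each other cycle costs $\gamma_{dR}(C_5)=6$, again $18$. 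Both exceed $17$, yet $S\neq Q$ is admissible and the bound does hold: set $f(z)=2$ and, on each cycle $w_ia_ib_ic_id_iw_i$, set $f(a_i)=2$, $f(c_i)=3$ and $f=0$ elsewhere; then each $w_i$ has two neighbours in $V_2$, every other vertex has a neighbour in $V_3$, and the weight is $2+3\cdot 5=17$. The same double failure occurs for four pendant copies of $C_5$ (both constructions give $24$ versus the required $\lfloor 252/11\rfloor=22$, while the $f(z)=2$ function gives $22$) and for three pendant $C_5$'s plus a pendant $C_3$. So what is missing is not patience in checking extremal mixtures but a third construction type, $f(z)=2$ combined with values $2$ exploiting the ``two neighbours in $V_2$'' clause of the definition; without it your case analysis cannot be closed, and your stated expectation --- that both constructions are simultaneously \emph{tight} only at $Q$ --- is incorrect: on the graphs above both constructions strictly violate the bound even though the lemma is true for them.

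For contrast, the paper's proof avoids all of this with one structural move you did not consider: delete from each cycle one edge incident with its attachment (or identified) vertex. Since $\gamma_{dR}(S)\leq\gamma_{dR}(S-e)$ for every edge $e$, this reduces the problem to the resulting tree, which is a spider centred at $z$ (a path when $r+s+t=2$), and Proposition C gives $\gamma_{dR}\leq n+1$, which is at most $\frac{12n}{11}$ as soon as $n\geq 11$; only the finitely many graphs with $n(S)\leq 10$ remain to be checked directly, and it is exactly there that the exclusion $S\neq Q$ is used. Note that the extremal weight-$(n+1)$ spider functions are precisely of the $f(z)=2$ type your framework lacks, which is why the spider reduction succeeds where your two-construction amortisation breaks down; if you wish to keep your decomposition, you must add that construction and redo the accounting for the pendant-$C_5$-heavy configurations.
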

 \begin{proof}
Let $V(C_{m_{i},k_{i}})=\{x_{1}^{i},\cdots ,x_{m_{i}}^{i},y_{1}^{i},\cdots ,y_{k_{i}}^{i}\}$ where recall that $x_{1}^{i}x_{2}^{i}\cdots x_{m_{i}}^{i}x_{1}^{i}$ is a cycle, $y_{1}^{i}\cdots y_{k_{i}}^{i}$ is a path, and $x_{1}^{i}y_{1}^{i}$ is the edge of $C_{m_{i},k_{i}}$ joining the cycle and the path, for each $1\leq i\leq t$, $V(C_{j})=\{z_{1}^{j},\cdots ,z_{l_{j}}^{j}\}$ for $1\leq j\leq r$ and $V(C'_{j})=\{w_{1}^{j},\cdots ,w_{n_{j}}^{j}\}$ for $1\leq j\leq s$. If $n(S)\in\{5,6,7,8,9,10\}$, then by a simple calculation we can see that $\gamma_{dR}(S)<\frac{12n(S)}{11}$. Let $n(S)\geq 11$. Let $T$ be a tree obtained from $S$ by deleting the edges $$x_{1}^{1}x_{m_{1}}^{1},\cdots ,x_{1}^{t}x_{m_{t}}^{t}, z_{1}^{1}z_{l_{1}}^{1},\cdots ,z_{1}^{r}z_{l_{r}}^{r}, w_{1}^{1}w_{n_{1}}^{1},\cdots ,w_{1}^{s}w_{n_{s}}^{s}$$. If $r+s+t=2$, then $\gamma_{dR}(S)\leq\frac{12n(S)}{11}$ and if $r+s+t\geq3$, then the result follows from Proposition C.
\end{proof}
\begin{lemma}\label{lem3}
Let $H\in \mathcal{H}$ and $u\in V(H)$. If $G$ is a graph obtained from $H$ and $C_{m,k}$ for some integers $m\geq3$ and $k\geq1$ other than $m=5$ and $k=2,3,5$, $m=7$ and $k=3$, by adding the edge $uy_{k}$, then $\gamma_{dR}(G)\leq\frac{12n(G)}{11}.$\end{lemma}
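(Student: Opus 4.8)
The plan is to realise $G$ as a single-edge augmentation of the disjoint union $H\sqcup C_{m,k}$ and then to exploit that double Roman domination behaves well under both disjoint unions and edge additions. Concretely, $V(G)=V(H)\cup V(C_{m,k})$ and $E(G)=E(H)\cup E(C_{m,k})\cup\{uy_{k}\}$, so that $n(G)=n(H)+(m+k)$. The first step is simply to record this decomposition and the order count.

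Next I would establish two elementary facts about $\gamma_{dR}$. First, it is additive over disjoint unions: a function is a double Roman dominating function of a disjoint union precisely when its restriction to each component is one, so $\gamma_{dR}(H\sqcup C_{m,k})=\gamma_{dR}(H)+\gamma_{dR}(C_{m,k})$. Second, $\gamma_{dR}$ is monotone non-increasing under edge addition: if $f$ is a double Roman dominating function of a graph and we insert an edge, then every vertex keeps all of its former neighbors (and may gain one), so conditions (a) and (b) remain satisfied with the \emph{same} labels; hence $f$ is still a double Roman dominating function of the larger graph. Applying this to the new edge $uy_{k}$ gives $\gamma_{dR}(G)\leq\gamma_{dR}(H\sqcup C_{m,k})$.

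Combining the two facts yields $\gamma_{dR}(G)\leq\gamma_{dR}(H)+\gamma_{dR}(C_{m,k})$, and it remains to bound each summand. Since $H\in\mathcal{H}$, by definition $\gamma_{dR}(H)\leq\frac{12\,n(H)}{11}$. Since the pair $(m,k)$ is assumed to avoid exactly the forbidden cases $m=5,\ k\in\{2,3,5\}$ and $m=7,\ k=3$, Lemma \ref{lem1} applies and gives $\gamma_{dR}(C_{m,k})\leq\frac{12(m+k)}{11}$. Adding these estimates and using $n(G)=n(H)+(m+k)$ produces
$$\gamma_{dR}(G)\leq\frac{12\,n(H)}{11}+\frac{12(m+k)}{11}=\frac{12\,n(G)}{11},$$
which is the desired bound.

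The argument is short, so the genuinely delicate points are verifications rather than constructions. The crux is the monotonicity-under-edge-addition step: one must check it against the precise double Roman definition, since this is the only place the new edge $uy_{k}$ is used, and it works exactly because additional neighbors can never destroy condition (a) or (b). The remaining care is bookkeeping: one should confirm that the pairs $(m,k)$ excluded in the hypothesis of this lemma are contained in the set of pairs for which Lemma \ref{lem1} actually delivers its bound, so that the invocation of Lemma \ref{lem1} is legitimate.
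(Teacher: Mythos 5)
Your proof is correct and follows essentially the same route as the paper: the paper likewise combines a $\gamma_{dR}(H)$-function with a $\gamma_{dR}(C_{m,k})$-function into a single double Roman dominating function of $G$ (your edge-addition monotonicity observation is exactly why that combined function works), and then adds the bound coming from $H\in\mathcal{H}$ to the bound on $\gamma_{dR}(C_{m,k})$. The only discrepancy is that the paper cites Lemma \ref{lem2} for the estimate $\gamma_{dR}(C_{m,k})\leq\frac{12(m+k)}{11}$, which appears to be a typo for Lemma \ref{lem1}, the lemma you correctly invoke and whose excluded pairs are the ones excluded in the hypothesis here.
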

\begin{proof}
Let $f$ be a $\gamma_{dR}(H)$-function and $g$ be a $\gamma_{dR}(C_{m,k})$-function. Then the function $h$ defined by $h(x)=f(x)$ for $x\in V(H)$ and $h(x)=g(x)$ otherwise, is a $DRDF$ of $G$. Lemma \ref{lem2} and the fact $H\in \mathcal{H}$ imply that $\gamma_{dR}(G)\leq\omega(f)+\omega(g)\leq\frac{12n(H)}{11}+\frac{12(m+k)}{11}=\frac{12n(G)}{11}$.
\end{proof}
\begin{lemma}\label{lem4}
Let $H\in\mathcal{H}$ and $u\in V(H)$. If $G$ is a graph obtained from $H$ and a cycle $C_{m}=x_{1},\cdots ,x_{m}x_{1}$ with $m\notin\{5,7\}$, by adding the edge $ux_{1}$, then $\gamma_{dR}(G)\leq\frac{12n(G)}{11}.$\end{lemma}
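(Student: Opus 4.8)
The plan is to follow the template of the proof of Lemma \ref{lem3}: take a $\gamma_{dR}(H)$-function $f$ together with an optimal double Roman dominating function on the attached cycle, paste the two functions, and then control the total weight. The essential observation is that passing from the two disjoint pieces ($H$ and $C_m$) to $G$ only adds the edge $ux_1$, so every vertex keeps all the neighbours it had before; consequently no domination condition can be destroyed, and the whole argument collapses to a single weight estimate on the cycle.

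Concretely, I would let $f$ be a $\gamma_{dR}(H)$-function and $g$ be a $\gamma_{dR}(C_m)$-function, and define $h$ on $G$ by $h=f$ on $V(H)$ and $h=g$ on $\{x_1,\dots,x_m\}$. Each cycle vertex with value $0$ still has, \emph{within the cycle}, either two neighbours of value $2$ or one neighbour of value $3$, and each vertex of $H$ keeps the witnesses it had under $f$ (by Proposition A we may assume no vertex is assigned the value $1$). Hence $h$ is a valid DRDF of $G$ and $\gamma_{dR}(G)\le \omega(f)+\omega(g)$.

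The only real content is the bound $\gamma_{dR}(C_m)\le \frac{12m}{11}$ for $m\ge 3$ with $m\notin\{5,7\}$, and this is exactly where the excluded lengths enter; I would obtain it just as in Lemma \ref{lem1}. Via the Hamiltonian path, $\gamma_{dR}(C_m)\le\gamma_{dR}(P_m)\le m+1$, so for all $m\ge 11$ we get $\gamma_{dR}(C_m)\le m+1\le\frac{12m}{11}$, while for the finitely many remaining admissible lengths $m\in\{3,4,6,8,9,10\}$ a direct check gives $\gamma_{dR}(C_m)=m<\frac{12m}{11}$ (the alternating $2,0$ pattern settles the even cases and the $3,0,0$ pattern settles $m=3,9$). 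The values $m=5$ and $m=7$ are precisely the short cycles with $\gamma_{dR}(C_m)=m+1$ and $m+1>\frac{12m}{11}$, namely $\gamma_{dR}(C_5)=6>\frac{60}{11}$ and $\gamma_{dR}(C_7)=8>\frac{84}{11}$, which is exactly why they must be excluded from the hypothesis.

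Combining the two estimates yields $\gamma_{dR}(G)\le\omega(f)+\omega(g)\le\frac{12n(H)}{11}+\frac{12m}{11}=\frac{12(n(H)+m)}{11}=\frac{12n(G)}{11}$, as claimed. The main (and essentially only) obstacle is the cycle estimate, with its sharp dependence on $m \bmod 6$; the pasting step itself is the same bookkeeping as in Lemma \ref{lem3}, and no finer case analysis on the value $f(u)$ is needed because a standalone optimal cycle function already meets the target ratio once $m\notin\{5,7\}$.
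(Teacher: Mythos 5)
Your proof is correct and follows essentially the same route as the paper's: paste a $\gamma_{dR}(H)$-function with a $\gamma_{dR}(C_m)$-function (the added edge $ux_1$ cannot destroy domination), then bound the cycle's contribution, using $\gamma_{dR}(C_m)= m$ for the small admissible lengths and $\gamma_{dR}(C_m)\le m+1\le\frac{12m}{11}$ for $m\ge 11$. The only cosmetic difference is that you package the cycle estimate uniformly as $\gamma_{dR}(C_m)\le\frac{12m}{11}$ before adding, whereas the paper carries the two cases ($m\le 10$ and $m\ge 11$) separately through the final arithmetic.
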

\begin{proof}
Let $f$ be a $\gamma_{dR}(H)$-function and let $g$ be a $\gamma_{dR}(C_{m})$-function. Then the function $h$ defined by $h(x)=f(x)$ for $x\in V(H)$ and $h(x)=g(x)$ otherwise, is a $DRDF$ of $G$. Now, if $m\leq10$, then since $m\notin\{5,7\}$, $\omega(g)\leq m$. Also, since $H\in\mathcal{H}$ we obtain
$$\gamma_{dR}(G)\leq\omega(f)+\omega(g)\leq\frac{12n(H)}{11}+m=\frac{12(n(G)-m)}{11}+m<\frac{12n(G)}{11}.$$
If $m\geq 11$, then Proposition D and $H\in \mathcal{H}$ imply that
$$\gamma_{dR}(G)\leq\omega(f)+\omega(g)\leq\dfrac{12n(H)}{11}+m+1=\dfrac{12(n(G)-m)}{11}+m+1\leq\dfrac{12n(G)}{11},$$
as desired.
\end{proof}

\begin{lemma}\label{lem5}
Let $H\in \mathcal{H}$ and $u\in V(H)$. If $G$ is a graph obtained from $H$ and a cycle $C_{5}$ and $C_{5,k}$ such that $V(C_{5})=\{z_{1},\cdots ,z_{5}\}$, $V(C_{5,k})=\{x_{1},\cdots ,x_{5},y_{1},\cdots ,y_{k}\}$ where $k\geq1$, $x_{1}$ is adjacent to $y_{1}$ and joining $x_{1}$ to exactly one vertex of $C_{5}$ and joining $u$ to $y_{k}$, then $\gamma_{dR}(G)\leq\frac{12n(G)}{11}.$\end{lemma}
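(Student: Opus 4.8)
The plan is to mimic the extension arguments of Lemmas \ref{lem3} and \ref{lem4}: start from a $\gamma_{dR}(H)$-function $f$, which has weight $\omega(f)\le \frac{12n(H)}{11}$ because $H\in\mathcal{H}$, and extend it to the $10+k$ new vertices $x_1,\dots,x_5,z_1,\dots,z_5,y_1,\dots,y_k$ by an explicitly constructed function $g$ of total weight at most $11+k$. Since $n(G)=n(H)+10+k$, the function $h$ that equals $f$ on $V(H)$ and $g$ elsewhere will satisfy $\omega(h)=\omega(f)+\omega(g)\le \frac{12n(H)}{11}+11+k$, and it then remains only to check that this quantity is at most $\frac{12n(G)}{11}$.

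The construction of $g$ is where the saving occurs, and the device is to let the single vertex $x_1$ carry weight $3$ and do triple duty. First set $g(x_1)=3$: being a neighbour of $x_2,x_5$, of the attachment vertex $z_1$ of $C_5$, and of $y_1$, this one vertex double Roman dominates all four of them simultaneously. The $x$-cycle is completed by a second $3$ on $x_3$ (covering $x_4$), so the whole $x$-cycle is handled with weight $6$. For the cycle $C_5$ on $z_1,\dots,z_5$ the attachment vertex $z_1$ is already dominated by $x_1$, so only the path $z_2z_3z_4z_5$ remains, and $g(z_3)=3,\ g(z_5)=2$ covers it with weight $5$. Finally $y_1$ is already dominated by $x_1$, so it only remains to double Roman dominate $y_2,\dots,y_k$, that is the path $P_{k-1}$; placing a $\gamma_{dR}(P_{k-1})$-function there costs, by the standard bound $\gamma_{dR}(P_n)\le n+1$ used already in Lemma \ref{lem1}, at most $(k-1)+1=k$. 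Hence $\omega(g)\le 6+5+k=11+k$, and one checks directly that $h$ is a $DRDF$ of $G$: the extra edges $x_1z_1$ and $uy_k$ only add neighbours and cannot destroy any domination, so in particular $u$ stays dominated by $f$ and $y_k$ by $g$.

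It then remains to verify $\frac{12n(H)}{11}+11+k\le\frac{12(n(H)+10+k)}{11}$, which reduces to $121+11k\le 120+12k$, i.e. $k\ge 1$, and so holds for every admissible $k$. The only delicate point is the weight bookkeeping on the new structure, and this is also where the real idea lies: because the hub $x_1$ dominates $y_1$ for free, the pendant path is charged only $\gamma_{dR}(P_{k-1})\le k$ rather than $\gamma_{dR}(P_k)$, the latter being just too expensive to fit under $\frac{12(10+k)}{11}$. This is precisely why, unlike in Lemma \ref{lem3}, no values $k\in\{2,3,5\}$ need be excluded here; together with the fact that the two $5$-cycles joined at the bridge $x_1z_1$ are covered with total weight $11$, it is exactly what makes the bound hold (the arithmetic being tight at $k=1$). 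The rest is the routine verification that the displayed $g$ is a valid double Roman dominating assignment on each of the three pieces.
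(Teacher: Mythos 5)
Your proposal is correct and follows essentially the same route as the paper: the paper also combines a $\gamma_{dR}(H)$-function $f$ with a dominating function $g$ on the attached piece $G-H$ (the two $5$-cycles plus pendant path) and sums the two bounds, asserting $\omega(g)\leq\frac{12(n(G)-n(H))}{11}$ only ``by a simple calculation.'' You have simply supplied that calculation explicitly --- the weight-$(11+k)$ assignment with $x_1$ carrying a $3$ and the check $11(11+k)\leq 12(10+k)$ for $k\geq 1$ --- which is a welcome filling-in of the detail the paper omits.
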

\begin{proof}
Let $f$ be a $\gamma_{dR}(H)$-function and let $g$ be a $\gamma_{dR}(G-H)$-function. Then the function $h$ defined by $h(x)=f(x)$ for $x\in V(H)$ and $h(x)=g(x)$ otherwise, is a $DRDF$ of $G$. Now by a simple calculation we see that $\omega(g)\leq \frac{12(n(G)-n(H))}{11}$. Also, since $H\in\mathcal{H}$ we obtain
$$\gamma_{dR}(G)\leq\omega(f)+\omega(g)\leq\dfrac{12n(H)}{11}+\dfrac{12(G)-n(H))}{11}\leq\dfrac{12n(G)}{11}.$$\end{proof}

Let $\mathcal{F}_{1}$ be the family of all connected multigraphs without loops and with minimum degree at least $3$. Assume that $\mathcal{F}$ is the family of all graphs obtained from some graph
in $\mathcal{F}_{1}$ by subdividing any edge at least once and at most seven except ten times. Note that any graph in $\mathcal{F}$ has order at least $5$. Suppose that A denotes the set of vertices of degree at least $3$ in $G$, and let  $B=V(G)-A$. A path $P$ of $G$ is called maximal if $V(P)\subseteq B$ and each end-vertex of $P$ is adjacent to a vertex of $A$.
 For each $i\geq 1$, let $\mathcal{P}_{i}=\{P |$  $P$ is a maximal path with $|V(P)|=i$ $\}$. Let $\mathcal{P}=\cup_{i\geqslant1}\mathcal{P}_{i}$. Note that $A\cup\bigcup_{P\in \mathcal{P}}V(P)$ is a partition of $V(G)$. For $P\in \mathcal{P}$, let
 $X_{P}=\{ u\in A |$  $u$ is adjacent to an end-vertex of $P\}$. Then $A=\cup_{P\in \mathcal{P}}X_{P}$ and since $G$ is obtained from some multigraph without loops in $\mathcal{F}_{1}$ by subdividing all of its edges at least once, we have $|X_{P}|=2$ for each $P\in\mathcal{P}$. Hence $|A|\geq2$.
\begin{lemma}\label{lem6}
Let $G\in  \mathcal{F}$ and $u$ be a vertex in $A$ such that $deg(u)=max\{deg(x)|$ $x\in A\}$. Let $P_{1},P_{2}\in\mathcal{P}_{4}$, and the end vertices of $P_{1},P_{2}$ have no common vertex except in $u$ and $deg(u)=3$. Then there exists a double Roman dominating function $f$ of $G$ such that  $\omega(f)\leq \frac{12n}{11}$ and $f$ assigns a positive value to every vertex of degree at least $3$.
\end{lemma}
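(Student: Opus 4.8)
The plan is to exploit the fact that, under the hypotheses, the block of nine vertices around $u$ can be dominated at "ratio one" (weight $9$ on $9$ vertices, which is strictly below the $\tfrac{12}{11}$ budget) and even pays for one vertex outside the block, so that deleting this block and extending an inherited function over it stays within budget while keeping $u$ positive. Set up notation as follows. Since $\deg(u)=3$ equals the maximum degree, every vertex of $A$ has degree exactly $3$, and $u$ meets exactly three maximal paths. Write $P_1=a_1a_2a_3a_4$ with $a_1$ adjacent to $u$ and $a_4$ adjacent to some $u'\in A$, $P_2=b_1b_2b_3b_4$ with $b_1$ adjacent to $u$ and $b_4$ adjacent to some $u''\in A$, and let $c_1$ be the first vertex of the third maximal path $P_3$ at $u$. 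By hypothesis $u'\neq u''$. Let $D=\{u,a_1,a_2,a_3,a_4,b_1,b_2,b_3,b_4\}$.

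First I would record the cheap local assignment on $D$: put $f(u)=f(a_3)=f(b_3)=3$ and $f=0$ on the other six vertices of $D$. Then $a_1,b_1$ and the external neighbor $c_1$ are dominated by $u\in V_3$; $a_2,a_4$ are dominated by $a_3\in V_3$; and $b_2,b_4$ by $b_3\in V_3$. Thus condition (a) holds at every $0$-vertex of $D$, the value $3$ is assigned to $u$ (satisfying the positivity requirement at $u$), and the assignment neither demands nor relies on anything at $u'$ or $u''$. Its weight is exactly $9$ on the $9$ vertices of $D$, and it additionally dominates $c_1\notin D$.

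Next I would form a strictly smaller graph $H$ by deleting $D$ and reconnecting the three severed stubs so as to restore $\delta\ge 2$ without pushing any surviving vertex of $A$ below degree $3$: add the edge $u'u''$ (raising both $u'$ and $u''$ back to degree $3$) and the edge $u''c_1$ (so $c_1$ regains degree $2$), choosing the attachment of $c_1$ to preserve connectivity. Then $H$ is connected, $\delta(H)\ge2$, $n(H)=n-9$, and $A(H)\supseteq A(G)\setminus\{u\}$; by the induction hypothesis (the smaller graph $H$ lying in $\mathcal{H}$, with Proposition \ref{pro D} and Lemmas \ref{lem4}--\ref{lem5} as fallbacks for the degenerate configurations) $H$ carries a DRDF $f_H$ with $\omega(f_H)\le\tfrac{12(n-9)}{11}$ that is positive on every vertex of degree at least $3$. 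I would then glue $f_H$ to the block assignment. The interface is clean because $u',u''$ are positive under $f_H$ (so the edges $u'u'',u''c_1$ removed in passing back to $G$ cannot un-dominate them or their neighbors), while $c_1$ is dominated in $G$ by $u\in V_3$; the block is dominated internally. The resulting $f$ is a DRDF of $G$, positive on all of $A$, with
\[
\omega(f)\le \frac{12(n-9)}{11}+9=\frac{12n}{11}-\frac{9}{11}<\frac{12n}{11}.
\]

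The step I expect to be the real obstacle is the reconnection and its bookkeeping: deleting the degree-$3$ vertex $u$ severs an \emph{odd} number of stubs, so they cannot simply be paired, and any rewiring must simultaneously keep $H$ connected, keep $\delta(H)\ge 2$, and leave every surviving vertex of $A$ at degree at least $3$ (so that the inductive "positive on degree $\ge 3$" property transfers to $G$). One must also treat the coincidences where the far end $u'''$ of $P_3$ equals $u'$ or $u''$, and re-verify conditions (a)–(b) at precisely the vertices incident to the added or deleted edges ($u',u'',c_1$). Confirming that a single consistent reconnection meets all of these constraints is where the care lies; the weight estimate above is then routine.
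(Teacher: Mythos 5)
Your block assignment on $D=\{u\}\cup V(P_1)\cup V(P_2)$ (weight $9$, with $u,a_3,b_3\in V_3$, dominating $D$ internally and also $c_1$) and the closing arithmetic $\frac{12(n-9)}{11}+9<\frac{12n}{11}$ are both correct; the proof collapses at the inductive appeal for the reconnected graph $H$. Nothing available delivers a DRDF of $H$ of weight at most $\frac{12(n-9)}{11}$ that is positive on all vertices of degree at least $3$. If you induct on the present lemma, $H$ must satisfy its hypotheses, and it cannot: the added edge $u'u''$ joins two vertices of degree at least $3$, whereas in every member of $\mathcal{F}$ the set $A$ is independent (each edge of the underlying multigraph is subdivided at least once), so $H\notin\mathcal{F}$; moreover $\deg_H(u'')=4$ breaks the requirement that the maximum degree in $A$ be $3$, and nothing forces $H$ to contain two paths of $\mathcal{P}_4$ in the prescribed position. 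The fallbacks you cite do not close the hole either: membership in $\mathcal{H}$ is circular, since $\mathcal{H}$ is by definition the family of graphs already known to satisfy $\gamma_{dR}\le\frac{12n}{11}$; Proposition D only yields $\frac{11(n-9)}{10}$, and since $\frac{11}{10}>\frac{12}{11}$ the resulting estimate $\frac{11(n-9)}{10}+9\le\frac{12n}{11}$ fails for every $n>99$; and neither Proposition D nor Lemmas \ref{lem4}--\ref{lem5} say anything about positivity on vertices of degree at least $3$ --- a property your gluing genuinely needs, because you invoke $f_H(u')>0$ and $f_H(u'')>0$ to argue that deleting the artificial edges $u'u''$ and $u''c_1$ cannot destroy domination. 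In short, what you require of $H$ is the full strength of the lemma's conclusion for a graph that has left $\mathcal{F}$, which is stronger than anything proved before this point in the paper.

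The defect is not bookkeeping and cannot be patched by a cleverer rewiring: the three stubs $u'$, $u''$, $c_1$ have total degree deficiency $3$, which is odd, and since in $\mathcal{F}$ no two vertices of degree at least $3$ may be adjacent and no degree-$2$ vertex may be pushed to degree $3$ next to a branch vertex, every way of adding edges among (or from) these stubs either leaves a deficiency, creates a forbidden $A$--$A$ adjacency, or demotes $u''$ to degree $2$ (in which case the inherited function need not be positive at $u''$, so the conclusion fails back in $G$). This is precisely why the paper's proof never deletes a branch vertex together with two of its incident paths. Its induction stays inside $\mathcal{F}$ throughout: it contracts an odd maximal path onto a single vertex (i.e., shortens the subdivision count of one multigraph edge), disposes of $|A|=2$ and of the residual situation $\mathcal{P}=\mathcal{P}_1\cup\mathcal{P}_2$ by explicit assignments, and treats the other adjacency patterns by surgeries that again replace a subdivided edge by a shorter subdivided edge. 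Indeed, the configuration in the statement (two $\mathcal{P}_4$'s at a degree-$3$ vertex with distinct far ends) is exactly the one the paper's argument does not resolve here but defers to Lemma \ref{lem7}, where entire pendant chunks are removed so that the complement remains in $\mathcal{F}$; your proposal, as written, does not supply a valid substitute for that step.
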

\begin{proof} Let $G\in\mathcal{F}$ be a graph of order $n$. The proof is given by induction on $n$. The result is immediate for $n\leq6$. Suppose	$n\geq7$ and let
the result hold for all graphs in $\mathcal{F}$
 of order less then $n$. Let $G\in\mathcal{F}$ be a graph of order $n\geq 7$. First let $\mathcal{P}_{3}\cup\mathcal{P}_{5}\cup\mathcal{P}_{7}\cup\mathcal{P}_{9}\neq0$. Suppose  $P=x_{1}\cdots x_{2k+1}\in\mathcal{P}_{2k+1}$ where $k\in \{1,2,3,4\}$ and let $X_{P}=\{a_{1},a_{2}\}$
  where $\{ a_{1}x_{1},a_{2}x_{2k	+1}\}\subseteq E(G)$. Assume that $G'= (G-(V(P)-\{ x_{k+1}\}))+\{ a_{1}x_{k+1},  a_{2}x_{k+1}\}$. By the induction hypothesis, there exists a double Roman dominating function $f$
 of $G'$ such that $a_{1},a_{2}\,\in V_{2}\cup\,V_{3}$, and
$\omega(f) \leq\frac{12n'}{11}$. It follows that $f(x_{k'+1})=0$. Then the function $g$, defined by $g(x_{2t})=2$, $g( x_{2t+1}) =0$ where $t\in\{0,1,2,3,4\}$ and $g(x) =f(x)$ otherwise, is a $DRDF$ of $G$ such that $g$ assigns a positive value to every vertex of degree at least 3, and $$\omega(g) =\omega(f)+2k\leq\dfrac{12n'}{11}+2k\leq\dfrac{12n}{11}.$$

 Assume now that $\mathcal{P}=\mathcal{P}_{1}\cup \mathcal{P}_{2k}$ where $k \in\{1,2,3,4,5\}$. Note that $n=|A|+m_{1}+2m_{2}+4m_{4}+6m_{6}+8m_{8}+10m_{10}$ and $m_{1}+m_{2}+m_{4}+m_{6}+m_{8}+m_{10}\geq
 3$ where $m_{t}=|\mathcal{P}_{t}|$ for $t\in\{1,2,4,6,8,10\}$. If $|A|=2$, then let $\mathcal{P}_{4}=\{ v_{1}^{i}v_{2}^{i}v_{3}^{i}v_{4}^{i}\,|\, \ 0\leq i\leq m_{4}\}$, $\mathcal{P}_{6}=\{ w_{1}^{j}w_{2}^{j}w_{3}^{j}w_{4}^{j}w_{5}^{j}w_{6}^{j} \,| \,0\leq j\leq m_{6}\}$, $\mathcal{P}_{8}=\{ y_{1}^{r}y_{2}^{r}y_{3}^{r}y_{4}^{r}y_{5}^{r}y_{6}^{r}y_{7}^{r}y_{8}^{r} \,|\, 0\leq r\leq m_{8}\}$, $\mathcal{P}_{10}=\{ z_{1}^{s}z_{2}^{s}z_{3}^{s}z_{4}^{s}z_{5}^{s}z_{6}^{s}z_{7}^{s}z_{8}^{s}z_{9}^{s}z_{10}^{s} \,|\, 0\leq s\leq m_{10}\}$ and define the function $g:V(G)\rightarrow\{0,1,2,3\}$ by $g(w_{5}^{j})=2$,  $g(x)=g(v_{2}^{i})=g(w_{3}^{j})=g(y_{3}^{r})=g(y_{6}^{r})=g(z_{3}^{s})=g(z_{6}^{s})=g(z_{9}^{s})=3$, for each $x\in A$ and each $0\leq i\leq m_{4}$, $0\leq j\leq m_{6}$, $0\leq r\leq m_{8}$, $0\leq s\leq m_{10}$, and $g(x)=0$ otherwise. Clearly, $g$ is a $DRDF$ of $G$ such that $g$ assigns a positive value to every vertex of degree at least $3$, and $$\omega(g)\leq3|A|+3m_{4}+5m_{6}+6m_{8}+9m_{10}\leq\dfrac{12n}{11}.$$

Henceforth, we assume $|A|\geq3$. We consider the following cases.

$\textbf{Case 1.}$  $u$ is adjacent to two maximal paths $P_{1}\in\mathcal{P}_{2}$ and $P_{2}\in\mathcal{P}_{4}$.\\                                                              Let $P_{1}=x_{1}x_{2}$ and $P_{2}=y_{1}y_{2}y_{3}y_{4}$ and let $\{ux_{1},uy_{1},a_{1}x_{2},a_{2}y_{4}\}\subseteq E(G)$ where $a,b\in A$. Assume that $G'$ is the graph obtained from $G$ by removing the vertices $u,y_{1},y_{2}$ and joining $y_{3}$ to each vertex $z\in N_{G}(u)-\{y_{1}\}$. Clearly, $G'\in\mathcal{F}$. By the induction hypothesis, there exists a double Roman dominating function $f$ of $G'$ such that $f$ assigns a positive value to every vertex of degree at least 3, and $\omega(f)\leq\frac{12(n-3)}{11}$. In particular, $f(y_{3})\geq2$ and $f(a_{1})\geq2$. To double Roman dominate the vertices $x_{1},x_{2}$, we must have $f(y_{3})+f(a_{1})+f(x_{1})+f(x_{2})\geq6$. Without loss of generality, we may assume that $f(y_{3})=f(a_{1})=3$. Define the function $g$ by $g(u)=3$, $g(y_{1})=g(y_{2})=0$ and $g(x)=f(x)$ otherwise. Clearly, $g$ is a $DRDF$ of $G$ such that $g$ assigns a positive value to every vertex of degree at least 3,$$\omega(g)=\omega(f)+3\leq\dfrac{12(n-3)}{11}+3\leq\dfrac{12n}{11}.$$

$\textbf{Case 2.}$  $u$ is adjacent to two paths $p_{1},p_{2}\in\mathcal{P}_{2}$.\\                                                                              Let $P_{1}=x_{1}x_{2}$ and $P_{2}=y_{1}y_{2}$ be two maximal paths in $\mathcal{P}_{2}$ and let $\{ux_{1},uy_{1},ax_{2},by_{2}\}\subseteq E(G)$ where $a,b \in A$. First let $a\neq b$. Assume that $G'$ is
the graph obtained from $G$ by removing the vertices $x_{1},u,y_{1}$ and joining $x_{2}$ to $y_{2}$ and
 joining every vertex $x$ in $N(u)-\{x_{1},y_{1}\}$ to either $a$ or $b$ provided $a$ or $b$ is not adjacent
 to the end-vertex of the maximal path containing $x$. Then by the induction hypothesis, there exists a double Roman dominating
 function $f$ of $G'$ such that f assigns a positive value to every vertex of degree
 at least 3, and $\omega(f)\leq(n-3)+1$. We may assume that $f(a)=f(b)=3$. Define the function $g$ by
 $g(u)=3$, $g(x_{1})=g(y_{1})=0$ and $g(x)=f(x)$ otherwise. Clearly, $g$ is a $DRDF$ of $G$ such that $g$ assigns a
 positive value to every vertex of degree at least 3, and $$\omega(g)=\omega(f)+3\leq\dfrac{12(n-3)}{11}+3\leq\dfrac{12n}{11}.$$
Now let $a=b$. Suppose $G'$ is the graph obtained from $G-x_{2}$ by adding the edge $x_{1}a$. Then by the induction hypothesis,
 there exists a double Roman dominating function $f$ of $G'$ such that $f$ assigns a positive value to every vertex of degree at
 least $3$, and $\omega(f)\leq\frac{12(n-1)}{11}$. We may assume that $f(a)=f(b)=3$. Then the function $g$ defined by $g(x_{2})=0$ and $g(x)=f(x)$ otherwise, is a $DRDF$ of $G$ such that $g$ assigns a positive value to every vertex of degree at least $3$, and $$\omega(g)=\omega(f)\leq\dfrac{12(n-1)}{11}<\dfrac{12n}{11}.$$

$\textbf{Case 3.}$  $u$ is adjacent to a path $P_{1}\in\mathcal{P}_{2k}$ where $k\in \{3,4,5\}$.\\                                                                                Let $P_{1}=x_{1}x_{2}\cdots x_{2k}$ and let $\{ux_{1},ax_{2k}\}\subseteq E(G)$, $a\in A$.
 Assume that $G'=(G-(V(P)-\{x_{1},x_{2},x_{3},x_{4}\}))+ ax_{4}$. Then By the induction hypothesis, there exists a double Roman dominating function $f$ of $G'$ such that $u$, $a \in V_{2}\cup V_{3}$, $\omega(f)\leq\frac{12(n-2)}{11}$. Define the function $g$ by $g(x_{2k-1})=2$ and $g(x_{2k})=0$ and $g(x)=f(x)$ otherwise.
 Clearly, $g$ is a $DRDF$ of $G$ such that $g$ assigns a positive value
 to every vertex of degree at least $3$, and $$\omega(g)=\omega(f)+2\leq\dfrac{12(n-2)}{11}+2<\dfrac{12n}{11}.$$

 $\textbf{Case 4.}$  $u$ is adjacent to two paths $P_{1},P_{2}\in\mathcal{P}_{4}$.\\                                                                                                                             Considering Case 1, we may assume that $u$ is not adjacent to any maximal path in $\mathcal{P}_{2}$. Let $P_{1}=x_{1}x_{2}x_{3}x_{4}$ and $P_{2}=y_{1}y_{2}y_{3}y_{4}$ and let $\{ux_{1},uy_{1},ax_{4},by_{4}\}\subseteq E(G)$ where $a,b\in A$. Considering the following subcases.

$\textbf{Subcase 4.1.}$  $a=b$, $deg(u)=3$.\\                                                                                                                                 Assume that $G'$is the graph obtained by removing the vertices $y_{1},y_{2},y_{3}$ and joining $x_{3}$ to $u$. By the induction hypothesis, there exists a double Roman dominating function $f$ of $G'$ such that $f$ assigns a positive value to every vertex of degree at least $3$, and $\omega(f)\leq\frac{12(n-3)}{11}.$\\
 If we assume that $f(x_{3})=f(u)=3$, $f(x_{1})=f(x_{2})=f(x_{4})=f(y_{4})=0$. Define the function $g$ by $g(y_{3})=3$, $g(y_{1})=g(y_{2})=0$ and $g(x)=f(x)$ otherwise. Clearly, $g$ is a $DRDF$ of $G$ such that $g$ assigns that positive value to every vertex of degree at least $3$, and                                                                                           $$\omega(g)=\omega(f)+3\leq\dfrac{12(n-3)}{11}+3\leq\dfrac{12n}{11}.$$
If we assume that $f(x_{2})=f(a)=3$, $f(x_{1})=f(x_{3})=f(x_{4})=f(y_{4})=0$. Define the function $g$ by $g(y_{2})=3$, $g(y_{1})=g(y_{3})=0$ and $g(x)=f(x)$ otherwise. Clearly, $g$ is a $DRDF$ of $G$ such that $g$ assigns a positive value to every vertex of degree at least $3$, and $$\omega(g)=\omega(f)+3\leq \dfrac{12(n-3)}{11}+3<\dfrac{12n}{11}.$$

$\textbf{Subcase 4.2.}$  $a\neq b$, $deg(u)\geq4$.\\                                                                                                                                 Assume that $G'$ is the graph obtained by removing the vertices $y_{1},y_{2},y_{3}$ and joining $x_{3}$ to $y_{4}$. By the induction hypothesis, there exists a double Roman dominating function $f$ of $G'$ such that $f$ assigns a positive value to every vertex of degree at least $3$, and $\omega(f)\leq\frac{12(n-3)}{11}$. Without loss of generality, we may assume that $f(x_{3})=f(u)=3$. Define the function $g$ by $g(y_{3})=3$, $g(y_{1})=g(y_{2})=0$ and $g(x)=f(x)$ otherwise. Clearly, $g$ is a $DRDF$ of $G$ such that $g$ assigns that a positive value to every vertex of degree at least $3$, and $$\omega(g)=\omega(f)+3\leq\dfrac{12(n-3)}{11}+3\leq\dfrac{12n}{11}.$$

$\textbf{Case 5.}$  $u$ is adjacent to a path $P_{1}\in \mathcal{P}_{4}$ and to two paths $P_{2},P_{3}\in \mathcal{P}_{1}$.
Let $P_{1}=x_{1}x_{2}x_{3}x_{4}$ such that $\{ux_{1},ax_{4}\}\subseteq E(G)$ where $a\in A$. By Case 1,2,3 and 4, we may assume that the other neighbors of $u$ belong to maximal paths in $\mathcal{P}_{1}$. Assume that $G'$ is the graph obtained from $G-\{u,x_{1}\}$ by joining $x_{2}$ to every vertex in $N(u)-\{x_{1}\}$. By the induction hypothesis, there exists a double Roman dominating function $f$ of $G'$ such that $f$ assigns a positive value to every vertex of degree at least $3$, and $\omega(f)\leq\frac{12(n-2)}{11}$. Define the function $g$ by $g(u)=2$, $g(x_{1})=0$ and $g(x)=f(x)$ otherwise. Clearly, $g$ is a $DRDF$ of $G$ such that $g$ assigns that positive value to every vertex of degree at least $3$, and $$\omega(g)=\omega(f)+2\leq \dfrac{12(n-2)}{11}+2<\dfrac{12n}{11}.$$

 Considering the above cases, we assume that $\mathcal{P}=\mathcal{P}_{1}\cup\mathcal{P}_{2}$ and that each vertex in $A$ is
  adjacent to at most one maximal path in $\mathcal{P}_{2}$. Since $deg(a)\geq3$ for each $a\in A$, we deduce that each vertex in $A$ is adjacent to at least two maximal paths in $\mathcal{P}_{1}$. Counting the edges between $A$ and $\cup_{P \in \mathcal{P}_{1}}V(P)$ implies that $|A|\leq m_{1}$. Let $A'=\{u\in A|$ $u$ is adjacent to an end-vertex of a maximal path in $\mathcal{P}_{2}\}$ and $A''=A-A'$. Counting the edges between $A'$ and $\cup_{P \in \mathcal{P}_{2}}V(P)$ yields $|A'|\leq 2m_{2}$. Define the function $g$ by $g(x)=3$ for $x\in A'$, $g(x)=2$ for $x\in A''$ and $g(x)=0$ otherwise. It is to see that $g$ is a $DRDF$ of $G$ that assigns a positive value to every vertex of degree at least $3$ $$\omega(g)\leq 3A'+2A''=2A+A'\leq A+m_{1}+2m_{2}=n<\dfrac{12n}{11}.$$ This completes the proof. \end{proof}

\begin{lemma}\label{lem7}
If $G\in\mathcal{F}$, then there exists a double Roman dominating function $f$ of $G$ such that $\omega(f)\leq \frac{12n}{11}$.
\end{lemma}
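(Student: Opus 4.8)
The plan is to obtain Lemma~\ref{lem7} as the weight-only shadow of Lemma~\ref{lem6}: since Lemma~\ref{lem7} asks merely for a DRDF $f$ with $\omega(f)\le\frac{12n}{11}$ and discards the requirement that $f$ be positive on every vertex of degree at least $3$, it is enough to produce, by a single induction over the whole family $\mathcal{F}$, such an $f$ for an arbitrary member. I would carry out this induction on $n=n(G)$ but with the stronger inductive hypothesis of Lemma~\ref{lem6} built in, namely that the DRDF on every smaller $\mathcal{F}$-graph is positive on the set $A$ of vertices of degree at least $3$. This strengthening is not optional: every reduction step excises part of a maximal path, applies the hypothesis to the contracted graph $G'\in\mathcal{F}$, and re-extends $f$ over the excised vertices, and that re-extension double Roman dominates the removed vertices only because the anchors in $A$ (the vertices $a_1,a_2,y_3,\dots$ appearing in Lemma~\ref{lem6}) already carry value at least $2$.

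First I would run exactly the case split of Lemma~\ref{lem6} around a maximum-degree vertex $u\in A$. If an odd maximal path exists, that is $\mathcal{P}_3\cup\mathcal{P}_5\cup\mathcal{P}_7\cup\mathcal{P}_9\neq\emptyset$, I would contract such a path to its midpoint and induct. Otherwise $\mathcal{P}=\mathcal{P}_1\cup\mathcal{P}_{2k}$, and I would separate $|A|=2$, handled by the explicit assignment giving $\omega(g)\le 3|A|+3m_4+5m_6+6m_8+9m_{10}\le\frac{12n}{11}$, from $|A|\ge 3$, handled through the five cases recording which maximal paths $u$ meets (one of $\mathcal{P}_2$ with one of $\mathcal{P}_4$; two of $\mathcal{P}_2$; one long even path; two of $\mathcal{P}_4$; one $\mathcal{P}_4$ with two paths of $\mathcal{P}_1$). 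In each case the reduction removes a controlled number of vertices and raises the weight by at most $\frac{12}{11}$ times that number, so the ratio is preserved.

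The residual, irreducible configurations are what force direct constructions. When $\mathcal{P}=\mathcal{P}_1\cup\mathcal{P}_2$ with each $A$-vertex on at most one path of $\mathcal{P}_2$, I would set $g=3$ on $A'$ (the vertices meeting a $\mathcal{P}_2$-path), $g=2$ on $A''=A-A'$, and $g=0$ elsewhere, closing the estimate via the edge counts $|A|\le m_1$ and $|A'|\le 2m_2$, which give $\omega(g)\le 2|A|+|A'|\le |A|+m_1+2m_2=n<\frac{12n}{11}$. The single genuinely tight situation, a maximum-degree vertex of degree exactly $3$ lying on two vertex-disjoint paths of $\mathcal{P}_4$, is precisely the hypothesis of Lemma~\ref{lem6}, so I would invoke that lemma unchanged. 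The hard part will be confirming that this list is exhaustive once all odd paths and all even paths of length at least $6$ have been peeled off, and checking that equality $\omega(f)=\frac{12n}{11}$ is forced only at the $C_5$-type blocks (the $C_{5,6}$ already identified in Lemma~\ref{lem1}), so that the strict inequality in the generic cases indeed leaves the bound in place.
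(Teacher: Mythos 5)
Your proposal inverts the logical structure of the paper and, in doing so, leaves the only genuinely new case of Lemma~\ref{lem7} without any argument. In the paper, the \emph{proof} of Lemma~\ref{lem6} covers every configuration \emph{except} the one in which the maximum-degree vertex $u$ has $\deg(u)=3$ and lies on two paths of $\mathcal{P}_4$ meeting only at $u$: its Subcase 4.1 treats $a=b$ with $\deg(u)=3$, its Subcase 4.2 treats $a\neq b$ with $\deg(u)\geq 4$, and the remaining combination ($a\neq b$, $\deg(u)=3$) is never handled there, even though it is the hypothesis displayed in Lemma~\ref{lem6}'s statement. Consistently with this, the paper's proof of Lemma~\ref{lem7} opens with ``By Lemma~\ref{lem6}, we assume that $u$ is adjacent to $P_{1},P_{2}\in\mathcal{P}_{4}$, $\deg(u)=3$, $P_{1},P_{2}$ are not adjacent except on $u$'': Lemma~\ref{lem6} is used to reduce \emph{to} this configuration, which Lemma~\ref{lem7} must then dispose of by direct construction. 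You do exactly the opposite: you re-run the case split of Lemma~\ref{lem6} on all the other configurations and then ``invoke that lemma unchanged'' on precisely this configuration. That citation is empty in the context of this paper --- it points at the one case no earlier argument establishes --- so your proposal omits all of the mathematical content that Lemma~\ref{lem7} exists to supply: the subcase where $u$ meets three paths of $\mathcal{P}_4$ (delete those $13$ vertices, apply induction, extend with additional weight $12$); and the subcase where the third neighbor $z$ of $u$ is a $\mathcal{P}_1$-path with second neighbor $c\in A$, split according to whether $z$ is or is not attached to a neighbor $u'$ of $P_1$ or $P_2$ (delete $10$ vertices and extend with weight $9$, or split off a $15$-vertex piece with $\gamma_{dR}=15$ and glue minimum DRDFs of the two sides).

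A second, related gap: you build into your induction the stronger hypothesis that the DRDF is positive on all of $A$, for \emph{every} $G\in\mathcal{F}$. The paper never proves this strengthened statement, and its own treatment of the critical configuration cannot support it: in Case 2 of the proof of Lemma~\ref{lem7} the function is assembled from arbitrary minimum DRDFs of $G'$ and of $G-G'$, which need not be positive on vertices of degree at least $3$. Dropping positivity is precisely why Lemma~\ref{lem7} is stated separately and more weakly than Lemma~\ref{lem6}; if the positive-on-$A$ version held for all of $\mathcal{F}$ by the same induction, Lemma~\ref{lem7} would be redundant. So even granting that your case list is exhaustive, in the configuration $\deg(u)=3$ with two disjoint $\mathcal{P}_4$-paths you would still owe an argument that weight at most $\frac{12n}{11}$ can be achieved \emph{while} keeping $f$ positive on $A$ --- a claim proved nowhere (the paper silently skips it), yet one your strengthened induction needs in order to recurse.
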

\begin{proof}
Let $G\in\mathcal{F}$  be a graph of order $n$. The  proof is given by induction on $n$. The result is immediate for $n\leq6$. Suppose $n\geq7$ and let	the result hold for all graphs in $\mathcal{F}$
 of order less then $n$. Let $G\in\mathcal{F}$ be a graph of order $n\geq 7$. By Lemma \ref{lem6}, we assume that $u$ is adjacent to $P_{1},P_{2}\in\mathcal{P}_{4}$, $deg(u)=3$, $P_{1},P_{2}$ are not adjacent except on $u$.  Now, First suppose that $u$ is adjacent to three  maximal paths $P_{1},P_{2},P_{3}\in\mathcal{P}_{4}$ such that $P_{1}=x_{1}x_{2}x_{3}x_{4}, P_{2}=y_{1}y_{2}y_{3}y_{4}, P_{3}=z_{1}z_{2}z_{3}z_{4}$.
Assume that $G'$ is the graph obtained by removing the vertices $x_{4},x_{3},x_{2},x_{1},u,y_{4},y_{3},y_{2},y_{1},z_{4},z_{3},z_{2},z_{1}$. It is easy to see that $G'\in\mathcal{F}$. By the induction hypothesis and Lemma \ref{lem6},
   there exists a double Roman dominating function $f$ of $G'$
  such that $\omega(f)\leq \frac{12n-13}{11}$. Define the function
  $g$ by $g(u)=g(x_{3})=g(y_{3})=g(z_{3})=3$,
  $g(x_{1})=g(y_{1})=g(z_{1})=g(x_{2})=g(y_{2})=g(z_{2})=g(x_{4})=g(y_{4})=g(z_{4})=0$ and $g(x)=f(x)$ otherwise. Clearly, $g$ is
  a $DRDF$ of $G$ such that
                    $$\omega(g)=\omega(f)+12\leq\dfrac{12(n-13)}{11}+12<\dfrac{12n}{11}.$$
    Henceforth, we may assume that each vertex in $N(u)-\{x_{1},y_{1}\}$ belongs to a in maximal path in $\mathcal{P}_{1}$. Let there exists a path $P_{3}=z\in\mathcal{P}_{1}$ such that $\{uz,zc\}\subseteq E(G)$ where $c\in A-\{u,a\}$. There are two cases.

\textbf{Case 1.} Let $P_{3}$ is not adjacent to $P_{1},P_{2}$ except in $u$.
Assume that $G'$ is the graph obtained by removing
  the vertices $x_{4},x_{3},x_{2},x_{1},u,z,y_{1},y_{2},y_{3},y_{3}$. Clearly, $G'\in\mathcal{F}$. By the induction hypothesis and Lemma \ref{lem6}
  there exists a double Roman dominating function $f$ of $G'$
  such that $\omega(f)\leq \frac{12(n-10)}{11}$. Define the function $g$ by $g(u)=g(x_{3})=g(y_{3})=3$, $g(y_{1})=g(y_{2})=g(y_{4})=g(x_{1})=g(x_{2})=g(x_{4})=g(z)=0$ and $g(x)=f(x)$ otherwise. Clearly, $g$ is a $DRDF$ of such that

               $$\omega(g)=\omega(f)+9\leq\dfrac{12(n-10)}{11}+9<\dfrac{12n}{11}.$$

 \textbf{Case 2.} Let $P_{3}$ is adjacent to $P_{1}$ or $P_{2}$ in $u'$ where $u'\in A$. By Lemma \ref{lem6}, let $u'$ is adjacent to a maximal path $P'=x'_{1}x'_{2}x'_{3}x'_{4}$. Then
Assume that $G'$ is the graph obtained by removing
  the vertices $x'_{4},x'_{3},x'_{2},x'_{1},x_{4},x_{3},x_{2},x_{1},u,u',z,y_{1},y_{2},y_{3},y_{4}$. Clearly, $G'\in\mathcal{F}$. Let $f$ be a $\gamma_{dR}(G')$-function and $g$ be a $\gamma_{dR}(G-G')$-function. Then the function $h$ defined by $h(x)=f(x)$ for $x\in V(G')$ and $h(x)=g(x)$ otherwise, is a $DRDF$ of $G$. By the induction hypothesis $\gamma_{dR}(G')\leq\frac{12(n-15)}{11}$, and by a simple calculation we can see that $\gamma_{dR}(G-G')=15$. Thus
                 $$\omega(g)=\omega(f)+\omega(g)\leq\dfrac{12(n-15)}{11}+15\leq\dfrac{12n}{11}.$$
                This completes the proof. \end{proof}

Let $\mathcal{E}$ be the family of simple graphs $G$ with order $n\geq5$, minimum degree $\delta(G)\geq2$ and with no component isomorphic to $C_{5}$ or $C_{7}$ and $G$ has no an induced subgraph $Q$ or $G=S\neq Q$ or $G$ has an induced subgraph $H$, a maximal path $P'=a_{1},\cdots ,a_{n}$ such that $a_{1}$ is adjacent to exactly one vertex of degree three in $Q$ and $a_{n}$ is adjacent to a vertex of $H$.
\begin{theorem}\label{theorem8}
Let $G\in\mathcal{E}$. Then there exists a double Roman dominating function $f$ of $G$ for which $\omega(f)\leq \frac{12n}{11}$.
There exist some graph in $\mathcal{E}$ for which the equality holds.\end{theorem}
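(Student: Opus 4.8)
The plan is to prove $\gamma_{dR}(G)\le\frac{12n}{11}$ for every $G\in\mathcal{E}$ by strong induction on $n$, using Lemmas \ref{lem3}--\ref{lem7} as the inductive steps and the known value of $\gamma_{dR}(C_n)$ for the cyclic base cases. First I would dispose of the disconnected case: each component again has $\delta\ge2$, is not $C_5$ or $C_7$, and still meets the $Q$-conditions, so summing the bound over components yields the bound for $G$ (small components such as $C_3,C_4$ being checked by hand, since their ratio lies below $\frac{12}{11}$). Thereafter assume $G$ is connected, and let $A$ be the set of vertices of degree at least $3$ and $B=V(G)-A$.

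If $A=\emptyset$ then $G$ is a cycle $C_n$ with $n\notin\{5,7\}$; from $\gamma_{dR}(C_n)=n$ for $n\equiv 0,2,3,4\pmod 6$ and $\gamma_{dR}(C_n)=n+1$ otherwise, one checks directly that $\gamma_{dR}(C_n)\le\frac{12n}{11}$, with equality precisely at $C_{11}$, where $\gamma_{dR}(C_{11})=12$. If $A\neq\emptyset$ and every maximal $B$-path has admissible length, then $G$ is obtained from a loopless multigraph of minimum degree at least $3$ by the permitted subdivisions, i.e.\ $G\in\mathcal{F}$, and Lemma \ref{lem7} gives the bound immediately.

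The heart of the argument is the remaining case, where $G$ carries a pendant cyclic feature. Here I would locate an end-block (a leaf of the block--cut-vertex tree) or a bridge-attached gadget and peel it off, matching it to the correct lemma: a bridge-attached cycle $C_m$ with $m\notin\{5,7\}$ is removed via Lemma \ref{lem4}; a bridge-attached cycle-with-tail $C_{m,k}$ with admissible $(m,k)$ via Lemma \ref{lem3}; and the $C_5$-plus-$C_{5,k}$ gadget---which is exactly a copy of $Q$ carrying a tail, corresponding to the third clause in the definition of $\mathcal{E}$---via Lemma \ref{lem5}. The forbidden pairs $m=5,\ k\in\{2,3,5\}$ and $m=7,\ k=3$, for which $C_{m,k}$ alone already exceeds the ratio, are precisely the configurations that must be routed through the $Q$-gadget of Lemma \ref{lem5} rather than Lemma \ref{lem3}. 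In each subcase the peeled remainder $H$ is strictly smaller, lies again in $\mathcal{E}$, hence in $\mathcal{H}$ by the induction hypothesis, and the invoked lemma upgrades $H\in\mathcal{H}$ to $G\in\mathcal{H}$; the equality claim is then witnessed by $C_{11}\in\mathcal{E}$.

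The main obstacle will be the bookkeeping that keeps the remainder inside $\mathcal{E}$ after each peeling: one must verify that deleting the gadget creates no degree-one vertex at the attachment point, leaves no $C_5$ or $C_7$ component, and exposes no forbidden induced $Q$. This is exactly what the three disjunctive clauses in the definition of $\mathcal{E}$ (no induced $Q$; the exceptional graph $S$ of Lemma \ref{lem2}; or a $Q$ joined through a maximal path to some $H\in\mathcal{H}$) are engineered to encode, so the delicate part is to show that every structural case of $G$ falls under one of these clauses and that the corresponding lemma's hypotheses---in particular the admissibility of $(m,k)$ and the requirement $\delta\ge2$ on $H$---are met at each step.
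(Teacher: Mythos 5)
Your outer scaffolding---induction on $n$, summing over components, the cycle case with equality at $C_{11}$, Lemma \ref{lem7} when no pendant structure exists, and peeling bridge-attached cycles $C_m$ ($m\notin\{5,7\}$) and admissible $C_{m,k}$ via Lemmas \ref{lem4} and \ref{lem3}---does match the skeleton of the paper's proof. The genuine gap sits exactly where you relegate the difficulty to ``bookkeeping'': your claim that the forbidden pairs ($m=5$, $k\in\{2,3,5\}$; $m=7$, $k=3$) are ``precisely the configurations that must be routed through the $Q$-gadget of Lemma \ref{lem5}'' is false. Lemma \ref{lem5} applies only when the pendant piece is a $C_5$ glued to a $C_{5,k}$, i.e.\ a copy of $Q$ with a tail. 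But a graph of $\mathcal{E}$ may carry a pendant $C_{5,2}$ whose $5$-cycle is the only $5$-cycle in the whole graph: attach the tail-end $y_2$ of a $C_{5,2}$ by an edge to a vertex of $C_{11}$. This graph has minimum degree $2$, no $C_5$ or $C_7$ component, and no induced $Q$ whatsoever, so it lies in $\mathcal{E}$; Lemma \ref{lem3} is inapplicable (forbidden pair) and Lemma \ref{lem5} is inapplicable (no second $C_5$). Moreover, no peel-and-add argument of the kind you propose can handle it, because $\gamma_{dR}(C_{5,2})=8>\frac{12\cdot 7}{11}$ while the host $C_{11}$ has no slack ($\gamma_{dR}(C_{11})=12=\frac{12\cdot 11}{11}$): the additive bound gives $12+8=20>\frac{12\cdot 18}{11}$. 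The pendant piece must share weight with the host (e.g.\ the tail vertex $y_2$ must lean on a host vertex carrying the value $3$, which brings the pendant's cost down to $6$), and the induction hypothesis alone, $\gamma_{dR}(H)\le\frac{12n(H)}{11}$, gives you no control over where the host's weight sits, so you cannot arrange this by composing two separately optimal functions.

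This is precisely why the paper does not peel these configurations off but operates on them surgically: in its Cases 2, 4, 5, 6 (pendant $C_{5,2}$, $C_{5,3}$, $C_{5,5}$ and the related two-path configurations at the vertex $b$) and Case 7 (pendant $C_7$), it deletes a few vertices near the attachment and adds edges re-joining the attachment neighborhood---in Case 4, delete $y_1,y_2,a$ and join $u$ to $x_1,x_4$, which converts the pendant $C_{5,2}$ into a $5$-cycle through $u$---then applies the induction hypothesis to the modified graph $G'$ of order $n-3$, and finally extends the resulting function back to $G$ at an extra cost of only $2$ or $3$ by a case analysis on the values the function takes at the surgery site. This surgery-and-extension mechanism, which is what lets the pendant piece amortize its excess against the host, is the actual core of the theorem and is entirely absent from your plan. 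The membership bookkeeping you flag is a real but secondary issue (note also that before asserting ``all maximal paths admissible implies $G\in\mathcal{F}$'' you need the paper's edge-minimality reduction, since $\mathcal{F}$ requires every edge of the underlying multigraph to be subdivided, i.e.\ no two vertices of degree at least $3$ may be adjacent).
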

\begin{proof}
We prove the result by induction on $n$. If $n=5$ or $n=6$ and $\Delta\neq3$, then the result holds from Proposition B or D. If $n=6$ and $\Delta=3$, then it is easy to check
that $\gamma_{dR}(G)\leq n<\frac{12n}{11}$. Suppose $n\geq7$ and the result holds for all graph $G\in\mathcal{E}$ of order less then $n$. Let $G\in\mathcal{E}$ be a graph of order $n\geq7$.  Since $\gamma_{dR}(G)\leq\gamma_{dR}(G-e)$ for every
  $e\in E(G)$, we may assume that $|E(G)|$ is as small as possible. If $G$ is disconnected and $G_{1},\cdots ,G_{t}$ are the components of $G$, then it follows from the induction hypothesis that $\gamma_{dR}(G_{i})\leq\frac{12|V(G_{i})|}{11}$ for each $i$ and so
                       $$\gamma_{dR}(G)=\sum_{i=1}^{t}\gamma_{dR}(G_{i})\leq\sum_{i=1}^{t}\dfrac{12|V(G_{i})|}{11}=\dfrac{12n}{11}.$$
                        Thus, we can assume that $G$ is connected. If $\Delta(G)=2$, then $G$ is a path or cycle and the result holds.  Assume that $\Delta(G)\geq3$. Let $A=\{v\in V(G)|$ $deg(v)\geq3\}$ and $B=V(G)-A$. If there are two adjacent vertices $x,y\in A$, then we deduce from the choice of $G$, that $G-xy$ is disconnected and that at least one of the components of $G-xy$ is isomorphic to $C_{5}$ or $C_{7}$.  min$\{deg(x),deg(y)\}=3$. Note that $A=\bigcup_{P\in \mathcal{P}}X_{P}$ and so $A\cup\bigcup_{P\in \mathcal{P}}V(P)$ is a partition of $V(G)$ where $1\leq |X_{P}|\leq2$ for every $P\in\mathcal{P}$. By Lemma \ref{lem6} and lemma \ref{lem7},  we may assume that there exists a maximal path $P$ such that $\delta(G-V(P))\leq1$. This implies that $|X_{P}|=1$ and since $G$ is simple we have $|V(P)|\geq2$. Suppose that $X_{P}=\{a\}$, $P=x_{1}\cdots x_{r}$ and $N_{G}(a)-V(P)=\{b\}$.
 Then there exists the unique maximal path $P'=\,y_{1}\cdots y_{t}$ such that $y_{t}=b$ or $b\in A$. Assume that $y_{1}$ is adjacent to $u$ where $u\in A$.
 For completing the proof there are  some cases.

 \textbf{Case 1.} $|V(P)|=4$, $b\in A$ and $b$ is adjacent to a maximal path $P_l$ where $|V(P_{l})|=n''\equiv0(mod\ 3)$ for some $l$.
 Assume that $l=1$. Let $G'$ be the graph obtained by removing the vertices $V(P_{1})$. Then by the induction hypothesis, there exists a double Roman dominating function $f$ of $G'$ such that $\omega(f)\leq \frac{12n'}{11}$. Let $g$ be a $\gamma_{dR}(P_{1})$-function. Then the function $h$ defined by $h(x)=f(x)$ for $x\in V(G')$ and $h(x)=g(x)$ otherwise, is a $DRDF$ of $G$. Thus $$\gamma_{dR}(G)\leq \omega (f) +\omega (g)\leq  \frac{12n'}{11}+n''\leq \frac{12n}{11}.$$

 \textbf{Case 2.} $|V(P)|=4$, $b\in A$ and $b$ is adjacent to two maximal paths $P_{1}=z_{1}\cdots z_{k}$, $P_{2}=z'_{1}\cdots z'_{k'}$, where $\{z_{k}c,z'_{k'}c\}\subseteq E(G)$ $c\in A$.
  Consider the following subcases.

  \textbf{Subcase 2.1.} $|V(P_{1})|\equiv1(mod\ 6)$, $|V(P_{2})|\equiv2(mod\ 6)$ or $|V(P_{1})|\equiv4(mod\ 6)$, $|V(P_{2})|\equiv5(mod\ 6)$.

\textbf{Subcase 2.1.1.} Assume that $|V(P_{1})|=1$ and $|V(P_{2})|=2$. Let $G'$ be the graph obtained by removing the vertices  $z'_{1},z'_{2}$. Then by the induction hypothesis, there exists a double Roman dominating function $f$ of $G'$ such that $\omega(f)\leq \frac{12n'}{11}$.\\
If $f(b)\in V_{2}\cup V_{3}$ or $f(c)\in V_{2}\cup V_{3}$, then $f$ can be extended to a $DRDF$of $G$ of weight $\omega(f)+2$ and so $\gamma_{dR}(G)\leq\frac{12(n-2)}{11}+2\leq\frac{12n}{11}$. If $f(b)=f(c)=0$, then we may assume, without loos of generality, that $f(z_{1})=2$, $f(a)=f(x_{3})=3$, $f(x_{1})=f(x_{2})=f(x_{4})=0$. The function $g$ defined by $g(z_{1})=g(z'_{1})=g(a)=g(x_{4})=0$, $g(z'_{2})=g(x_{1})=2$, $g(b)=3$ and $g(x)=f(x)$ otherwise, is a $DRDF$ of $G$ such that  $$\gamma_{dR}(G)\leq\frac{12(n-2)}{11}+2\leq\frac{12n}{11}.$$

\textbf{Subcase 2.1.2.} Assume that $|V(P_{1})|\neq1$ or $|V(P_{2})|\neq2$. Let $G'$ be the graph obtained by removing the vertices $z_{1},\cdots ,z_{k-1},z'_{1},\cdots ,z'_{k'-1},a,x_{1},..,x_{4}$ and joining $z_{k}$ to $z'_{k'}$ and  joining every vertex $x$ in $N(b)-\{z_{1},z'_{1}\}$ to $c$. Then by the induction hypothesis, there exists a double Roman dominating function $f$ of $G'$ such that $\omega(f)\leq \frac{12n'}{11}$. We may assume, without loos of generality, that $f(c)=3$. Then the function $g$ defined by $g(x)=f(x)$ for $x\in V(G')$, $g(z_{i})=g(z'_{j})=0$ when $i,j\equiv1,2(mod\ 3)$ and $g(b)=g(z_{i})=g(z'_{j})=3$ when $i,j\equiv0(mod\ 3)$, $g(a)=g(x_{2})=g(x_{4})=0$, $g(x_{2})=2$, $g(x_{3})=3$, is a $DRDF$ of $G$ such that $$\omega(g)=\omega(f)+\dfrac{12(n-n')}{11}\leq\dfrac{12n'}{11}+\dfrac{12(n-n')}{11}\leq\dfrac{12n}{11}.$$
For the following subcases, assume that $G'$ is the graph obtained by removing the vertices $z_{1},\cdots ,z_{k}, z'_{1}, \cdots ,z'_{k'},b$, and joining $c$ to every vertex $x$ in $N(b)-\{z_{1},z'_{1}\}$. Then by the induction hypothesis, there exists a double Roman dominating function $f$ of $G'$ such that $\omega(f)\leq \frac{12n'}{11}$.

 \textbf{Subcase 2.2.} $|V(P_{1})|\equiv2,4(mod6)$, $|V(P_{2})|\equiv2,4(mod6)$ or $|V(P_{1})|\equiv1(mod\ 6)$, $|V(P_{2})|\equiv5(mod\ 6)$.\\
If we assume that $f(c)=f(x_{1})=f(x_{2})=f(x_{4})=0$, $f(a)=f(x_{3})=3$, then define $g$ by $g(z_{i})=g(z'_{j})=0$ where $i,j\equiv1(mod 2)$, $g(z_{i})=g(z'_{j})=2$ where $i,j\equiv0(mod\ 2)$, $g(c)=0$ when $|V(P_{1})|\equiv2,4(mod\ 6)$, $g(c)=2$ otherwise, $g(a)=g(x_{1})=g(x_{3})=0$, $g(x_{2})=3$, $g(b)=g(x_{4})=2$, $g(x)=f(x)$ otherwise.\\
If $f(c)\in V_{2}\cup V_{3}$, then define $g$ by $g(z_{i})=g(z'_{j})=0$ when $i,j\equiv1,2(mod\ 3)$, $g(b)=g(c)=g(z_{i})=g(z'_{j})=3$ when $i,j\equiv0(mod\ 3)$, $g(x)=f(x)$ otherwise.

\textbf{Subcase 2.3.} $|V(P_{1})|\equiv1(mod\ 3)$, $|V(P_{2})|\equiv1(mod\ 3)$.\\
If $f(c)=0$, then define the function  $g$ by $g(b)=g(z_{i})=g(z'_{j})=0$ when $i,j\equiv0,1(mod\ 3)$, $g(c)=g(z_{i})=g(z'_{j})=3$ when $i,j\equiv2(mod\ 3)$, $g(x)=f(x)$ otherwise.\\
If $f(c)\in V_{2}\cup V_{3}$, then define $g$ by $g(z_{i})=g(z'_{j})=0$ when $i,j\equiv1,2(mod\ 3)$, $g(b)=g(z_{i})=g(z'_{j})=3$ when $i,j\equiv2(mod\ 3)$, $g(x)=f(x)$ otherwise.

\textbf{Subcase 2.4.} $|V(P_{1})|\equiv2(mod\ 3)$, $|V(P_{2})|\equiv2(mod\ 3)$.\\
If we assume that $f(c)=f(x_{1})=f(x_{2})=f(x_{4})=0$, $f(a)=f(x_{3})=3$, then define the function $g$ by $g(z_{i})=g(z'_{j})=0$ when $i,j\equiv1(mod\ 2)$, $g(b)=g(c)=g(z_{i})=g(z'_{j})=2$ when $i,j\equiv0(mod\ 2)$, $g(a)=g(x_{1})=g(x_{3})=0, g(x_{2})=3, g(x_{4})=2$, $g(x)=f(x)$ otherwise.\\
If $f(c)\in V_{2}\cup V_{3}$, then define the function $g$ by $g(z_{i})=g(z'_{j})=0$ when $i,j\equiv1,2(mod\ 3)$, $g(b)=g(c)=g(z_{i})=g(z'_{j})=3$ when $i,j\equiv0(mod\ 3)$, $g(x)=f(x)$ otherwise.\\
 Clearly, $g$ is a $DRDF$ of $G$ such that $$\omega(g)=\omega(f)+\dfrac{12(n-n')}{11}\leq\dfrac{12(n')}{11}+\dfrac{12(n-n')}{11}\leq \dfrac{12n}{11}.$$

\textbf{Case 3.} The vertex $b$ is adjacent to maximal paths $P_{i}=z^{i}_{1},\cdots ,z^{i}_{j}$ where $i\geq 2$, $j\geq1$ and $P_{i}$s have no common vertex except in $b$.
Assume that the end vertices of $P_{i}$s are  adjacent to $u_{i}$s, respectively. Consider some subcases as follows.\\

\textbf{Subcase 3.1.} Let there exist $C_{m_{1},k_{1}},\cdots ,C_{m_{t},k_{t}}$ where $V(C_{m_{i},k_{i}})=\{x_{1}^{i},\cdots ,x_{m_{i}}^{i},y_{1}^{i},\cdots ,y_{k_{i}}^{i}\}$ and $b$ be adjacent to $y_{k_{i}}^{i}$ or there exist cycles $C_{1},\cdots ,C_{r}$ where $b$ be adjacent to exactly one vertex of $C_{j}$ that $1\leq j\leq r$. Then by Lemma \ref{lem2}, we assume that \\$G-\{C_{1},\cdots ,C_{r}, C_{m_{1},k_{1}},\cdots ,C_{m_{t},k_{t}},a,b,x_{1},\cdots ,x_{4}\}\neq\emptyset$. Let that $G'$ is the graph obtained by removing the vertices $\{C_{1},\cdots ,C_{r}, C_{m_{1},k_{1}},\cdots ,C_{m_{t},k_{t}},a,b,x_{1},\cdots ,x_{4}\}$, $V(P_{i})$s. Then by the induction hypothesis, there exists a double Roman dominating function $f$ of $G'$ such that $\omega(f)\leq \frac{12n'}{11}$. Let $g$ be a $\gamma_{dR}(G-G')$-function. Then the function $h$ defined by $h(x)=f(x)$ for $x\in V(G')$ and $h(x)=g(x)$ otherwise, is a $DRDF$ of $G$. Proposition E imply that $$\gamma_{dR}(G)\leq \omega (f) +\omega (g)\leq  \dfrac{12n'}{11}+\dfrac{12(n-n')}{11}\leq \dfrac{12n}{11}.$$\\

\textbf{Subcase 3.2.}  Let $|\cup_{i}V(P_{i})|+6\geq11$ or ever $|V(P_{i})|=1$. Then assume that $G'$ is the graph obtained by removing the $G''=G[\bigcup_{i} V(P_{i})\cup\{a,x_{1},\cdots ,x_{4}\}]$. Let $f$ be a $\gamma_{dR}(G')$-function $g$ be a $\gamma_{dR}(G'')$-function. Then the function $h$ defined by $h(x)=f(x)$ for $x\in V(G')$ and $h(x)=g(x)$ otherwise, is a $DRDF$ of $G$. The induction hypothesis and Proposition E imply that $$\gamma_{dR}(G)\leq \omega (f) +\omega (g)\leq  \dfrac{12n'}{11}+\dfrac{12n''}{11}\leq \dfrac{12n}{11}.$$

 \textbf{Subcase 3.3.} $|\cup_{i}V(P_{i})|+6<11$, $u_{1}$ is adjacent to two maximal paths $P'_{1},P'_{2}$ and the end vertices $P'_{1},P'_{2}$ are adjacent to $w$ where $w\in A$. By a similar  argument, using in Case 2, we can see that $$\gamma_{dR}(G)\leq  \dfrac{12n}{11}.$$

\textbf{Subcase 3.4.} $|\cup_{i}V(P_{i})|+6<11$, $u_{1}$ is adjacent to maximal paths $P'_{j}$ where $j>1$ and $P'_{j}$s have no common vertex except in $u_{1}$.\\
Assume that $G'$ is the graph obtained by removing the vertices $V(P'_{j})$s,$V( P_{i})$s,$a,b,x_{1},\cdots ,x_{4}$, special pendant subgraphs attached at $u_{1}$. Let $f$ be a $\gamma_{dR}(G')$-function, $g$ be a $\gamma_{dR}(G-G')$-function. Then the function $h$ defined by $h(x)=f(x)$ for $x\in V(G')$ and $h(x)=g(x)$ otherwise, is a $DRDF$ of $G$. The induction hypothesis and Proposition E imply that$$\gamma_{dR}(G)\leq \omega (f) +\omega (g)\leq  \dfrac{12n'}{11}+\dfrac{12(n-n')}{11}\leq \dfrac{12n}{11}.$$

\textbf{Case 4.} $|V(P)|=4$, $|V(P')|=2$.
  Let $G'$ is the graph obtained by removing the vertices $y_{1},y_{2},a$ and joining $u$ to $x_{1},x_{k}$. Then by the induction hypothesis, there exists a double Roman dominating function $f$ of $G'$ such that $\omega(f)\leq \frac{12n'}{11}.$
 If we assume that $f(u)=f(x_{2})=f(x_{4})=0$, $f(x_{1})=2$, $f(x_{3})=3$. Define the function $g$ by $g(a)=3$, $g(y_{2})=g(x_{1})=0$, $g(y_{1})=2$, $g(x)=f(x)$ otherwise. Clearly, $g$ is a $DRDF$ of $G$ such that $$\omega(g)=\omega(f)+3\leq\frac{12(n-3)}{11}+3< \frac{12n}{11}.$$\\
If we assume $f(u)=f(x_{3})=3$, $f(x_{1})=f(x_{2})=f(x_{4})=0$. Define the function $g$ by $g(a)=3$, $g(y_{1})=g(y_{2})=0$, $g(x)=f(x)$ otherwise. Clearly, $g$ is a $DRDF$ of $G$ such that $$\omega(g)=\omega(f)+3\leq\dfrac{12(n-3)}{11}+3< \dfrac{12n}{11}.$$

\textbf{Case 5.}  $|V(P)|=4$, $|V(P')|=3.$
 Let $G'$ is the graph obtained by removing the vertices $y_{2},y_{3},a$ and joining $y_{1}$ to $x_{1},x_{k}$. Then by the induction hypothesis, there exists a double Roman dominating function $f$ of $G'$ such that $\omega(f)\leq \frac{12n'}{11}.$
 If we assume that $f(y_{1})=f(x_{2})=f(x_{4})=0$, $f(x_{1})=2$, $f(x_{3})=3$. Define the function $g$ by $g(a)=3$, $g(y_{3})=g(x_{1})=0$, $g(y_{2})=2$, $g(x)=f(x)$ otherwise. Clearly, $g$ is a $DRDF$ of $G$ such that $$\omega(g)=\omega(f)+3\leq\dfrac{12(n-3)}{11}+3\leq\dfrac{12n}{11}.$$
If we assume $f(y_{1})=f(x_{3})=3$, $f(x_{1})=f(x_{2})=f(x_{4})=0$. Define the function $g$ by $g(a)=3$, $g(y_{3})=g(y_{2})=0$, $g(x)=f(x)$ otherwise. Clearly, $g$ is a $DRDF$ of $G$ such that $$\omega(g)=\omega(f)+3\leq\dfrac{12(n-3)}{11}+3\leq\dfrac{12n}{11}.$$

\textbf{Case 6.}  $|V(P)|=4$, $|V(P')|=5.$
  Let $G'$ is the graph obtained by removing the vertices $y_{4},y_{5},a$ and joining $y_{3}$ to $x_{1},x_{4}$. Then and by the induction hypothesis, there exists a double Roman dominating function $f$ of $G'$ such that $\omega(f)\leq \frac{12n'}{11}.$
 If we assume that $f(y_{3})=f(x_{2})=f(x_{4})=0$, $f(x_{1})=2$, $f(x_{3})=3$. Define the function $g$ by $g(a)=3$, $g(y_{4})=2$, $g(y_{5})=g(x_{1})=0$, $g(x)=f(x)$ otherwise. Clearly, $g$ is a $DRDF$ of $G$ such that $$\omega(g)=\omega(f)+3\leq\dfrac{12(n-3)}{11}+3\leq \dfrac{12n}{11}.$$
If we assume $f(y_{3})=f(x_{3})=3$, $f(x_{1})=f(x_{2})=f(x_{4})=0$. Define the function $g$ by $g(a)=3$, $g(y_{5})=g(y_{4})=0$, $g(x)=f(x)$ otherwise. Clearly, $g$ is a $DRDF$ of $G$ such that $$\omega(g)=\omega(f)+3\leq\dfrac{12(n-3)}{11}+3\leq\dfrac{12n}{11}.$$

\textbf{Case 7.}  $|V(P)|=6$.
 Assume that $G'$ is the graph obtained from $G$ by removing the vertices $x_{1},x_{2}$ and joining $a$ to $x_{3}$. Clearly, by the induction hypothesis, there exists a double Roman dominating function $f$ of $G'$ such that $\omega(f)\leq \frac{12n'}{11}.$
We may assume that $f(a)=f(x_{4})=3$, $f(x_{3})=f(x_{5})=f(x_{6})=0$. Then the function $g$ defined by $g(x_{1})=0, g(x_{2})=2$, $g(x)=f(x)$ otherwise, is a $DRDF$ of $G$ such that $$\omega(g)=\omega(f)+2\leq\dfrac{12(n-2)}{11}+2\leq \dfrac{12n}{11}.$$
We may assume that $f(a)=f(x_{4})=f(x_{6})=0$, $f(x_{3})=2$, $f(x_{5})=3$. Then the function $g$ defined by $g(x_{2})=0, g(x_{1})=2$, $g(x)=f(x)$ otherwise, is a $DRDF$ of $G$ such that $$\omega(g)=\omega(f)+2\leq\dfrac{12(n-2)}{11}+2\leq \dfrac{12n}{11}.$$\\
According to the pervious Claims and Lemma \ref{lem6}, Lemma \ref{lem7}, we may assume that $G$ has an induced $H$ with $u\in V(H)$ such that $G$ be a graph obtained from $H$ and a cycle $C_{m}=x_{1},\cdots ,x_{m}x_{1}$, by identifying vertices $u$ and $x_{1}$.
Let $z$ denote the vertex resulting by identifying $u$ and $x_{1}$. Then there exists three following case.

\textbf{Subcase 7.1.} If $m\notin\{3,5,6,8,9,11\}$.\\
Let $f$ be a $\gamma_{dR}(H)$-function and let $g$ be a $\gamma_{dR}$-function of the path of order $m-1$ induced by $x_{2}x_{3}\cdots x_{m}$. Then the function $h$ defined by $h(x)=f(x)$ for $x\in V(H)-\{u\}$, $h(z)=f(u)$ and $h(x)=g(x)$ otherwise, is a $DRDF$ of $G$. Using the fact that $m\notin \{3,5,6,8,9,11\}$, a similar argument to that used in the proof of Lemma \ref{lem4} shows that $$\gamma_{dR}(G)\leq\dfrac{12n(G)}{11}.$$

\textbf{Subcase 7.2.} If $m\in\{3,6,8,9,11\}$.\\
Let $z$ is adjacent to maximal path $P_{r}=x_{1}\cdots x_{r}$. Then by the induction hypothesis we have $\gamma_{dR}(G-V(P_{r}))\leq \frac{12(n-r)}{11}$, ever $\gamma_{dR}(G-V(P_{r}))$-function $f$(we assume that $f(z)\in V_{2}\cup V_{3})$ can be extended to a $DRDF$ of $G$ of weight at $\omega(f)+r$(by assigning a 2 to $x_{2i}$ for  $0\leq i\leq \frac{r}{2}$ and a 0 to other vertices  of $P_{r}$ when $r\equiv0(mod 2)$, by assigning a 2 to $x_{2i}$ for $1\leq i\leq \frac{r-2}{2}$ and 3 to $x_{r-1}$ and a 0 to other vertices  of $P_{r}$ when $r\equiv1(mod 2)$, by assigning 0 to $x_{1}$ when $r=1$, $g(x)=f(x)$ for $x\in V(G-P_{r})$.

\textbf{Subcase 7.3.} Let $m\in \{5\}$.\\ Let there exist $C_{m_{1},k_{1}},\cdots ,C_{m_{t},k_{t}}$ where $V(C_{m_{i},k_{i}})=\{x_{1}^{i},\cdots ,x_{m_{i}}^{i},y_{1}^{i},\cdots , y_{k_{i}}^{i}\}$ and  $z$ be adjacent to $y_{k_{i}}^{i}$ or there exists cycles $C_{1},\cdots ,C_{r}$ where $z$ be adjacent to exactly one vertex of $C_{j}$ that $1\leq j\leq r$.\\
By Lemma \ref{lem2}, we may assume that $G-\{C_{1},\cdots ,C_{r}, C_{m_{1},k_{1}},\cdots ,C_{m_{t},k_{t}},a,b,x_{1},\cdots ,x_{4}\}\neq\emptyset$. Let $G'$ be the graph obtained by removing the vertices $\{C_{1},\cdots ,C_{r}, C_{m_{1},k_{1}},\cdots ,C_{m_{t},k_{t}},z,x_{1},\cdots ,x_{4}\}$, $V(P_{i})$s. Then by the induction hypothesis, there exists a double Roman dominating function $f$ of $G'$ such that $\omega(f)\leq \frac{12n'}{11}$. Let $g$ be a $\gamma_{dR}(G-G')$-function. Then the function $h$ defined by $h(x)=f(x)$ for $x\in V(G')$ and $h(x)=g(x)$ otherwise, is a $DRDF$ of $G$. Proposition E imply that $$\gamma_{dR}(G)\leq \omega (f) +\omega (g)\leq  \dfrac{12n'}{11}+\dfrac{12(n-n')}{11}\leq \dfrac{12n}{11}.$$

\textbf{Subcase 7.4.} Let $m\in \{5\}$ and the vertex $z$ is adjacent to  two maximal paths $P_{1}=x_{1},\cdots ,x_{k}$, $P_{2}=y_{1},\cdots ,y_{k'}$, $\{x_{k}c,y_{k'}c\}\subseteq E(G) $ where $c\in A$.\\
Assume that $G'$ is the graph obtained by removing the vertices $x_{1}\cdots  x_{k},y_{1}\cdots  y_{k'}$ and joining $z$ to $c$ and   every vertex $x$ in $N(z)-\{x_{1},y_{1}\}$. Then by the induction hypothesis, there exists a double Roman dominating function $f$ of $G'$ such that $\omega(f)\leq \frac{12(n-(k+k'))}{11}$. Then $f$ can be extended to a $DRDF$ of $G$ of weight at most $\omega(f)+(k+k')$. Thus $$\gamma_{dR}(G)\leq\dfrac{12(n-(k+k'))}{11}+(k+k')\leq\dfrac{12n}{11}.$$

\textbf{Subcase 7.5.} Let $m\in \{5\}$ and the  $z$ is adjacent to maximal paths $P_{i}$ where $i\geq 2$, $P_{i}$s have no common vertex except in $z$.\\
 Let $|C_{5}\cup\bigcup_{i}V(P_{i})|\leq10$. Assume that $G'$ is the graph obtained by removing the vertices $V(P_{l})=x_{1},\cdots ,x_{k}$ where $z$ is adjacent to $P_{l}$. Then by the induction hypothesis, there exists a double Roman dominating function $f$ of $G'$ such that $\omega(f)\leq \frac{12(n-k)}{11}$. we assume that $f(z)=3$. Then $f$ can be extended to a $DRDF$ of $G$ of weight at most $\omega(f)+k$. Thus $$\gamma_{dR}(G)\leq\dfrac{12(n-k)}{11}+k\leq\dfrac{12n}{11}.$$
If $|C_{5}\cup_{i}V(P_{i})|\geq11$. Assume that $G'$ is the graph obtained by removing the vertices $\{z,x_{1},\cdots ,x_{4}\}$ and $V(P_{i})$s. Let $f$ be a $\gamma_{dR}(G')$-function and let $g$ be a $\gamma_{dR}(G-G')$-function. Then the function $h$ defined by $h(x)=f(x)$ for $x\in V(H)$ and $h(x)=g(x)$ otherwise, is a $DRDF$ of $G$. Then the induction hypothesis and Proposition C imply that $$\gamma_{dR}(G)\leq \omega(f)+\omega(G)\leq\dfrac{12n'}{11}+\dfrac{12(n-n')}{11}\leq\dfrac{12n}{11}.$$

For equality, let $H$ be a graph obtained from two cycles of $C_{5}$, adding a new vertex $w$ and joining $w$ to exactly one vertex of each $C_{5}$. For any graph $G$, let $G_{H}$ be the graph obtained from $G$ by adding $|V(G)|$ copies $H_{1},\cdots ,H_{|V(G)|}$ of $H$, identifying $w_{i}$ with the $i$th vertex of $G$. This bound is sharp  for $C_{11}$, $G_{H}$.
$\gamma_{dR}(G_{H})=\frac{12n}{11}$, $\gamma_{dR}(C_{11})=12=\frac{12n}{11}$. This completes the proof. \end{proof}

Finally we prove the conjecture from paper \cite{kkcs}.

\begin{theorem}\label{theorem9}
Let $G$ be a simple graph of order $n$ with minimum degree two   different from $C_{5}$ and $C_{7}$. Then $\gamma_{dR}(G)=\frac{11n}{10}$ or $\frac{11n}{10}$ if and only if $G\in\mathcal{G}$.
\end{theorem}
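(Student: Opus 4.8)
The plan is to prove the two implications of the characterisation $\gamma_{dR}(G)=\frac{11n}{10}$ if and only if $G\in\mathcal{G}$ separately, the forward one being a direct weight count and the converse resting on Theorem \ref{theorem8}.

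The forward implication is the routine one. Suppose $G=G'_Q\in\mathcal{G}$, where $G'$ has vertices $v_1,\dots,v_k$ and $Q_i$ is the copy of $Q$ whose degree-three vertex is identified with $v_i$. Each $Q_i$ contributes exactly nine private vertices, so $n=10k$ and the target weight is $\frac{11n}{10}=11k$. The upper bound $\gamma_{dR}(G)\le 11k$ follows from Proposition D, since $G_Q$ has minimum degree at least $2$ and no component isomorphic to $C_5$ or $C_7$ (every component contains a whole copy of $Q$, hence has order at least $10$). For the matching lower bound I would take a $\gamma_{dR}$-function $f$ of $G$, which by Proposition A uses no value $1$, and set $W_i=V(Q_i)$. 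The sets $W_1,\dots,W_k$ partition $V(G)$, so $\omega(f)=\sum_i \omega(f|_{W_i})$. Since each of the nine private vertices of $Q_i$ has all of its neighbours inside $W_i$, the restriction $f|_{W_i}$ double Roman dominates every vertex of $Q\cong Q_i$ except possibly the attachment vertex $v_i$. A short case analysis on $f(v_i)$, using $\gamma_{dR}(C_5)=6$ and $\gamma_{dR}(P_4)=5$ for the two five-cycles glued along the bridge, then shows $\omega(f|_{W_i})\ge 11$. Summing gives $\gamma_{dR}(G)=\omega(f)\ge 11k$, and together with the upper bound we obtain equality.

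For the converse, suppose $\gamma_{dR}(G)=\frac{11n}{10}$. The decisive numerical fact is $\frac{12}{11}<\frac{11}{10}$ (that is, $120<121$), whence $\frac{12n}{11}<\frac{11n}{10}$. Thus $G\notin\mathcal{E}$, for otherwise Theorem \ref{theorem8} would give $\gamma_{dR}(G)\le\frac{12n}{11}<\frac{11n}{10}$, contradicting equality. Reading off the definition of $\mathcal{E}$ and using that $G$ has $\delta(G)\ge 2$, order at least $5$ and no $C_5$ or $C_7$ component, the only way to fail membership in $\mathcal{E}$ is that $G$ realises one of the prescribed $Q$-configurations as an induced structure. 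I would then proceed by induction on $n$: locate a copy of $Q$ joined to the rest of $G$ through a single identified vertex, delete its nine private vertices to obtain a graph $G^-$ of order $n-10$, show that the extremal value is inherited, i.e.\ $\gamma_{dR}(G^-)=\frac{11(n-10)}{10}$, conclude $G^-\in\mathcal{G}$ by induction, and re-attach the gadget to deduce $G\in\mathcal{G}$. Disconnected $G$ are handled componentwise through $\gamma_{dR}(G)=\sum_i\gamma_{dR}(G_i)$.

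The principal obstacle is making the converse's reduction step rigorous, which amounts to a complete equality analysis of every inequality used earlier. One must show that an extremal $G$ contains none of the cheaper local configurations treated in Lemmas \ref{lem1}--\ref{lem7} and in Cases 1--7 of Theorem \ref{theorem8}, since each of them strictly lowers the ratio below $\frac{11n}{10}$; and one must show that the pendant structure peeled off is forced to be precisely $Q$ rather than a lone $C_5$, a $C_{m,k}$, or the eleven-vertex gadget $H$ from the proof of Theorem \ref{theorem8}, each of which again leaves strict slack. Tracking which substructures leave no slack at all, and verifying that the unique slack-free gadget is a clean copy of $Q$ hung at a vertex of degree three, is where essentially all of the difficulty resides.
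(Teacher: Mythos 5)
Your proposal matches the paper's proof in both structure and substance: the forward direction rests on Proposition D, and the converse is an induction on $n$ that uses Theorem \ref{theorem8} to dispose of graphs in $\mathcal{E}$ and then peels off an induced copy of $Q$, obtaining a graph of order $n-10$ to which the induction hypothesis applies (the paper phrases this contrapositively, showing $G\notin\mathcal{G}$ forces $\gamma_{dR}(G)=\gamma_{dR}(G')+11\neq\frac{11n}{10}$). Indeed, your explicit partition argument giving $\omega(f|_{W_i})\ge 11$ on each copy of $Q$ supplies a detail the paper leaves implicit, since Proposition D alone yields only the upper bound in the forward direction.
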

\begin{proof}
Let $G\in\mathcal{G}$. Then by Proposition D, $\gamma_{dR}(G)=\frac{11n}{10}$.\\
Now let $G\notin\mathcal{G}$. The proof is by induction on $n$. If $n\leq13$, then it is easy to check that  $\gamma_{dR}(G)\neq\frac{11n}{10}$ . Suppose $n\geq14$ and the result holds for all graph $G\notin\mathcal{G}$ of order less than $n$ with minimum degree two different $C_{5}$ and $C_{7}$. Let $G\notin\mathcal{G}$ be a graph of order $n\geq14$ with minimum degree  two  different from $C_{5}$ and $C_{7}$.
If $G\in\mathcal{E}$, then by Theorem \ref{theorem8}, $\gamma_{dR}(G)\neq\frac{11n}{10}$.\\
Now we assume that $G$ has an induced subgraph $Q$ with $u\in V(Q)$ and $u\in A$ such that $u$ is adjacent to $c_{i}\in A$ where $i\geq1$.
Suppose $G'$ is the graph obtained by removing the vertices $V(Q)$ and joining $c_{1}$ to every $c_{i}$ where is not adjacent to $c_{1}$. Let $f$ be a $\gamma_{dR}(G')$-function. Then clearly, $G'\notin\mathcal{G}$ and by induction hypothesis,  $\omega(f)\neq\frac{11(n-10)}{10}$. Then $f$ can be extended to a $DRDF$ of $G$ $\omega(f)+11$ and thus $$\gamma_{dR}(G)=\omega(f)+11\neq\frac{11(n-10)}{10}+11=\frac{11n}{10}.$$
\end{proof}


\begin{thebibliography}{99}

\bibitem{acs} H.A. Ahangar, M. Chellali, S.M. Sheikholeslami, On the double Roman domination in graphs,
Discrete Applied Mathematics, Vol. 232 (2017) 1-7.

\bibitem{amjadi} J. Amjadi, S. Nazari-Moghaddam, S. M. Sheikholeslami and · L. Volkmann, An upper bound on the double Roman domination
number, J Comb. Optim. https://doi.org/10.1007/s10878-018-0286-6.

\bibitem{bhh} R.A. Beeler, T.W. Haynes, S.T. Hedetniemi,  Double Roman domination, Discrete Applied Mathematics, Vol. 211 (2016) 23-29.

\bibitem{chhm} M. Chellali, T.W. Haynes, S.T. Hedetniemi, A.A. McRaee, Roman $\{2\}$-domination, Discrete Applied Mathematics, 204 (2016) 22-28.

\bibitem{chen} X. Chen, A note on the double Roman domination number of graphs, to appear in Czechoslovak Mathematical Journal.

\bibitem{cdhh} E.J. Cockayne, P.A. Dreyer, S.M. Hedetniemi, S.T. Hedetniemi, Roman domination in graphs, Discrete Math. 278 (2004) 11-22.

\bibitem{hhs} T.W. Haynes, S.T. Hedetniemi, P.J. Slater, Fundamentals of Domination in Graphs, Marcel. Dekker, New York, 1998.

\bibitem{hrsw} S.T. Hedetniemi, R.R. Rubalcaba, P.J. Slater, M. Walsh, Few compare to the great Roman empire, Congr. Numer. 217 (2013) 129-136.

\bibitem{h} M.A. Henning, A characterization of Roman trees, Discussiones Mathematicae Graph Theory, 22 (2002) 325-334.

\bibitem{jr} N. Jafari Rad, H. Rahbani, Some progress on the double Roman domination in graphs, Discuss. Math. Graph Theory 39 (2019) 41-53.

\bibitem{kkcs} R. Khoeilar, H. Karami, M. Chellali, S.M. Sheikholeslami, An improved upper bound on the double Roman domination number of 
graphs with minimum degree at least two, to appear in Discrete Applied Mathematics, (2018).

\bibitem{lc} Ch.H. Liu, G.J. Chang, Roman domination on strongly chordal graphs, Journal of Combinatorial Optimization, 26 (2013) 608-619.

\bibitem{rer} C.S. ReVelle, K.E. Rosing, Defendens imperium romanum: a classical problem in military strategy, Amer. Math. Monthly 107 (7) (2000) 585-594.

\bibitem {moj1} D.A. Mojdeh and Zh. Mansouri, On the independent double Roman domination in graphs, Bulletin of the Iranian Mathematical Society, DOI:
10.1007/s41980-019-00300-9.

\bibitem{moj2} D.A. Mojdeh, A. Parsian, I. Masoumi, Characterization of double Roman trees, to appear in Ars Combinatoria.

\bibitem{moj3} D.A. Mojdeh, A. Parsian, I. Masoumi, Strong Roman domination number of complementary prism graphs, Turk. J. Math. Comput. Sci.
11(1)(2019) 40-47.

\bibitem{s} I. Stewart, Defend the Roman Empire!, Sci. Amer. 281 (6) (1999) 136-139.

\bibitem{w} D.B. West, Introduction to Graph Theory, Book, Second Edition,
Prentice-Hall, Upper Saddle River, NJ, 2001.
\end{thebibliography}
\end{document}